\newtheorem{theorem}{Theorem}
\newtheorem{lemma}[theorem]{Lemma}
\newtheorem{prop}[theorem]{Proposition}
\newtheorem{remark}{Remark}
\newtheorem{claim}{Claim}
\newenvironment{proof-sketch}{\noindent{\bf Sketch of Proof}\hspace*{1em}}{\qed\bigskip}
\newcommand{\RR}{\mathbb R}
\newcommand{\NN}{\mathbb N}
\newcommand{\ZZ}{\mathbb Z}
\newcommand{\bb}{\begin{equation}}
\newcommand{\bbb}{\end{equation}}
\newcommand{\intom}{\int_\Omega}
\renewcommand{\le}{\leqslant}
\renewcommand{\leq}{\leqslant}
\renewcommand{\ge}{\geqslant}
\renewcommand{\geq}{\geqslant}
\begin{document}
\title[Double phase problems with reaction of arbitrary growth]{Double-phase problems with reaction of arbitrary growth}
\author[N.S. Papageorgiou]{Nikolaos S. Papageorgiou}
\address[N.S. Papageorgiou]{National Technical University, Department of Mathematics,
				Zografou Campus, Athens 15780, Greece \&  Institute of Mathematics, Physics and Mechanics, Jadranska 19, 1000 Ljubljana, Slovenia}
\email{\tt npapg@math.ntua.gr}
\author[V.D. R\u{a}dulescu]{Vicen\c{t}iu D. R\u{a}dulescu}
\address[V.D. R\u{a}dulescu]{Faculty of Applied Mathematics, AGH University of Science and Technology, al. Mickiewicza 30, 30-059 Krak\'ow, Poland \& Institute of Mathematics ``Simion Stoilow" of the Romanian Academy, P.O. Box 1-764, 014700 Bucharest, Romania \&  Institute of Mathematics, Physics and Mechanics, Jadranska 19, 1000 Ljubljana, Slovenia}
\email{\tt vicentiu.radulescu@imar.ro}
\author[D.D. Repov\v{s}]{Du\v{s}an D. Repov\v{s}}
\address[D.D. Repov\v{s}]{Faculty of Education and Faculty of Mathematics and Physics, University of Ljubljana, \&  Institute of Mathematics, Physics and Mechanics, Jadranska 19, 1000 Ljubljana, Slovenia}
\email{\tt dusan.repovs@guest.arnes.si}
\keywords{Double phase problem, nonlinear maximum principle, nonlinear regularity theory, critical point theory, critical groups.\\
\phantom{aa} 2010 AMS Subject Classification: 35J20, 35J92, 58E05}
\begin{abstract}
We consider a parametric nonlinear nonhomogeneous elliptic equation, driven by the sum of two differential operators having different structure. The associated energy functional has unbalanced growth and we do not impose any global growth conditions to the reaction term, whose behavior is prescribed only near the origin. Using  truncation and comparison techniques and Morse theory, we show that
the problem has multiple solutions in the case of high perturbations.
We also show that if a symmetry condition is imposed to the reaction term, then we can generate a sequence of distinct nodal solutions with smaller and smaller energies.
\end{abstract}
\maketitle

\section{Introduction}
This paper was motivated by several recent contributions to the qualitative analysis of nonlinear problems with unbalanced growth. We first refer to the pioneering contributions of Marcellini \cite{marce1,marce2} who studied lower semicontinuity and regularity properties of minimizers of certain quasiconvex integrals. Problems of this type arise in nonlinear elasticity and are connected with the deformation of an elastic body, cf. Ball \cite{ball1,ball2}.

In order to recall the roots of double phase problems, let us assume that $\Omega$ is a bounded domain in $\RR^N$ ($N\geq 2$) with smooth boundary. If $u:\Omega\to\RR^N$ is the displacement and if $Du$ is the $N\times N$  matrix of the deformation gradient, then the total energy can be represented by an integral of the type
\bb\label{paolo}I(u)=\intom F(x,Du(x))dx,\bbb
where the energy function $F=F(x,\xi):\Omega\times\RR^{N\times N}\to\RR$ is quasiconvex with respect to $\xi$. One of the simplest examples considered by Ball is given by functions $F$ of the type
$$F(\xi)=g(\xi)+h({\rm det}\,\xi),$$
where ${\rm det}\,\xi$ is the determinant of the $N\times N$ matrix $\xi$, and $g$, $h$ are nonnegative convex functions, which satisfy the growth conditions
$$g(\xi)\geq c_1\,|\xi|^p;\quad\lim_{t\to+\infty}h(t)=+\infty,$$
where $c_1$ is a positive constant and $1<p<N$. The condition $p\leq N$ is necessary to study the existence of equilibrium solutions with cavities, that is, minima of the integral \eqref{paolo} that are discontinuous at one point where a cavity forms; in fact, every $u$ with finite energy belongs to the Sobolev space $W^{1,p}(\Omega,\RR^N)$, and thus it is a continuous function if $p>N$.

In accordance with these problems arising in nonlinear elasticity, Marcellini \cite{marce1,marce2} considered continuous functions $F=F(x,u)$ with {\it unbalanced growth} that satisfy
$$c_1\,|u|^p\leq |F(x,u)|\leq c_2\,(1+|u|^q)\quad\mbox{for all}\ (x,u)\in\Omega\times\RR,$$
where $c_1$, $c_2$ are positive constants and $1< p< q$. Regularity and existence of solutions of elliptic equations with $p,q$--growth conditions were studied in \cite{marce2}.

The study of non-autonomous functionals characterized by the fact that the energy density changes its ellipticity and growth properties according to the point has been continued in a series of remarkable papers by Mingione {\it et al.} \cite{mingi1,mingi2,mingi3,mingi4,mingi5}. These contributions are in relationship with the works of Zhikov \cite{zhikov1}, in order to describe the
behavior of phenomena arising in nonlinear
elasticity.
In fact, Zhikov intended to provide models for strongly anisotropic materials in the context of homogenisation.
We also point out that these functionals revealed to be important in the study of duality theory
and in the context of the Lavrentiev phenomenon \cite{zhikov2}.
One of the problems considered by Zhikov was the {\it double phase} functional
$$
{\mathcal P}_{p,q}(u) :=\intom (|Du|^p+a(x)|Du^q)dx,\quad 0\leq a(x)\leq L,\ 1<p<q.
$$
where the modulating coefficient $a(x)$ dictates the geometry of the composite made by
two differential materials, with hardening exponents $p$ and $q$, respectively.

Motivated by these results, we study in this paper a paper with $(p,2)$--growth. More precisely, we consider the following nonlinear, nonhomogeneous parametric Dirichlet problem
\begin{equation}
-\Delta_pu(z)-\Delta u(z)=\lambda f(z,u(z))\ \mbox{in}\ \Omega,\ u|_{\partial\Omega}=0,\ 2<p<\infty,\ \lambda>0,\tag{$P_{\lambda}$}\label{eqP}
\end{equation}
where $\Omega\subseteq\RR^N$ is a bounded domain with smooth $C^2$-boundary $\partial\Omega$.

For $q\in(1,\infty)$, we denote by $\Delta_q$  the $q$-Laplace differential operator defined by
$$\Delta_qu={\rm div}\,(|Du|^{q-2}Du)\ \mbox{for all}\ u\in W^{1,q}_{0}(\Omega).$$

If $q=2$, then $\Delta_2=\Delta$ is the usual Laplacian.

So, in problem \eqref{eqP} the differential operator (the left-hand side of the equation), is not homogeneous. The reaction term $f(z,x)$ is a Carath\'eodory function (that is, for all $x\in\RR$ the mapping $z\mapsto f(z,x)$ is measurable and for almost all $z\in\Omega$, $x\mapsto f(z,x)$ is continuous). Here the interesting feature of our work is that no global growth conditions are imposed on $f(z,\cdot)$. Instead, all our hypotheses on $f(z,\cdot)$ concern its behavior near zero. Our goal is to show that under these minimal conditions on the reaction term, we can obtain multiplicity results for problem \eqref{eqP} when the parameter $\lambda>0$ is big enough. Moreover, we provide sign information for all solutions we produce. Using variational methods combined with truncation and comparison techniques and Morse theory, we prove two multiplicity theorems, producing respectively three and four nontrivial smooth solutions, all with sign information. When a symmetry condition is imposed on $f(z,\cdot)$ (namely, that $f(z,\cdot)$ is odd) we show that we can have an entire sequence of smooth nodal (that is, sign changing) solutions converging to zero in $C^1_0(\overline{\Omega})$.

Recently, multiplicity theorems with sign information for the solutions of $(p,2)$-equations (that is, equations driven by the sum of a $p$-Laplacian and a Laplacian), have been proved by Aizicovici, Papageorgiou and Staicu \cite{2}, Papageorgiou and R\u adulescu \cite{17, 18}, Papageorgiou, R\u adulescu and Repov\v{s} \cite{20}, Papageorgiou and Smyrlis \cite{21}, Sun \cite{23} and Sun, Zhang and Su \cite{24}. In all these works, it is assumed that the reaction term has subcritical polynomial growth. We mention that $(p,2)$-equations arise in problem of mathematical physics, see Cherfils and Ilyasov \cite{5} (reaction diffusion equations), Derrick \cite{7} (elementary particles), and Wilhelmsson \cite{25} (plasma physics).

\section{Mathematical Background}
Let $X$ be a Banach space and let $X^*$ be its topological dual. We denote by $\left\langle \cdot,\cdot\right\rangle$ the duality brackets for the pair $(X,X^*)$. Given $\varphi\in C^1(X,\RR)$, we say that $\varphi$ satisfies the ``Palais-Smale condition" (the ``PS-condition" for short), if the following property holds:

\smallskip
``Every sequence $\{u_n\}_{n\geq 1}\subseteq X$ such that $\{\varphi(u_n)\}_{n\geq 1}\subseteq\RR$ is bounded and
$$\varphi'(u_n)\rightarrow 0\ \mbox{in}\ X^*\ \mbox{as}\ n\rightarrow\infty,$$
\indent admits a strongly convergent subsequence".

\smallskip
This is a compactness-type condition on the functional $\varphi$ and leads to a deformation theorem from which one can derive the minimax theory of the critical values of $\varphi$. One of the main results in this theory is the ``mountain pass theorem" of Ambrosetti and Rabinowitz \cite{3}.
\begin{theorem}\label{th1}
	Let $X$ be a Banach space. Assume that $\varphi\in C^1(X,\RR)$ satisfies the PS-condition, $u_0,u_1\in X,\ ||u_1-u_0||>r$,
	$$\max\{\varphi(u_0),\varphi(u_1)\}<\inf[\varphi(u):||u-u_0||=\rho]=m_{\rho}.$$
	Set $c={\underset{\mathrm{\gamma\in\Gamma}}\inf}{\underset{\mathrm{0\leq t\leq 1}}\max}\varphi(\gamma(t))$, where $\Gamma=\{\gamma\in C([0,1],X):\gamma(0)=u_0,\gamma(1)=u_1\}$. Then $c\geq m_{\rho}$ and $c$ is a critical value of $\varphi$ (that is, there exists $\hat{u}\in X$ such that $\varphi'(\hat{u})=0$, $\varphi(\hat{u})=c$).
\end{theorem}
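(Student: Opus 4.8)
The plan is to prove the two conclusions separately, invoking the PS-condition only for the second. The inequality $c \geq m_\rho$ is purely topological. Since $\|u_1 - u_0\| > \rho$, the point $u_1$ lies outside the sphere $S_\rho = \{u \in X : \|u - u_0\| = \rho\}$ while $u_0$ is its center. Hence for every $\gamma \in \Gamma$ the continuous function $t \mapsto \|\gamma(t) - u_0\|$ equals $0$ at $t = 0$ and exceeds $\rho$ at $t = 1$, so by the intermediate value theorem there is $t_\gamma \in (0,1)$ with $\gamma(t_\gamma) \in S_\rho$. Consequently $\max_{0 \leq t \leq 1} \varphi(\gamma(t)) \geq \varphi(\gamma(t_\gamma)) \geq m_\rho$, and taking the infimum over $\Gamma$ gives $c \geq m_\rho$. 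Combined with the hypothesis this yields $c \geq m_\rho > \max\{\varphi(u_0), \varphi(u_1)\}$, so both endpoints sit strictly below level $c$; this fact will be used to keep them fixed under the deformation below.

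For the second assertion I would argue by contradiction, assuming $c$ is a regular value, i.e. $K_c := \{u \in X : \varphi'(u) = 0,\ \varphi(u) = c\} = \emptyset$. The engine is the deformation lemma: using that $\varphi$ satisfies the PS-condition and that $K_c = \emptyset$, there exist $\varepsilon > 0$ with $c - \varepsilon > \max\{\varphi(u_0), \varphi(u_1)\}$ and a continuous map $\eta : X \to X$ such that $\eta(\varphi^{c+\varepsilon}) \subseteq \varphi^{c-\varepsilon}$ and $\eta(u) = u$ for every $u$ with $\varphi(u) \notin (c - \varepsilon, c + \varepsilon)$, where $\varphi^a := \{u \in X : \varphi(u) \leq a\}$ denotes the sublevel set. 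Granting this, pick by the definition of the infimum a path $\gamma \in \Gamma$ with $\max_{0 \leq t \leq 1} \varphi(\gamma(t)) < c + \varepsilon$, so that $\gamma([0,1]) \subseteq \varphi^{c+\varepsilon}$. Since $\varphi(u_0), \varphi(u_1) < c - \varepsilon$, the map $\eta$ fixes the endpoints, whence $\eta \circ \gamma \in \Gamma$; but $\eta(\gamma([0,1])) \subseteq \varphi^{c-\varepsilon}$ forces $\max_{0 \leq t \leq 1} \varphi((\eta \circ \gamma)(t)) \leq c - \varepsilon < c$, contradicting the definition of $c$. Hence $K_c \neq \emptyset$ and $c$ is a critical value.

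The main obstacle is the deformation lemma itself, and this is where the PS-condition is indispensable. In a general Banach space the derivative $\varphi'(u) \in X^*$ does not single out a descent direction in $X$, so one cannot literally flow along $-\nabla \varphi$. I would instead construct a Palais pseudo-gradient field: a locally Lipschitz map $V$ on the regular set $\{u : \varphi'(u) \neq 0\}$ satisfying $\|V(u)\| \leq 2\|\varphi'(u)\|_*$ and $\langle \varphi'(u), V(u) \rangle \geq \|\varphi'(u)\|_*^2$, obtained by a local choice of almost-optimal directions glued together with a locally finite partition of unity. Integrating the flow $\dot\sigma = -\chi(\sigma) V(\sigma)$, with a Lipschitz cutoff $\chi$ supported in the strip $\varphi^{-1}([c - \varepsilon, c + \varepsilon])$, yields $\eta$ as the time-one map; along trajectories $\varphi$ decreases at rate at least $\|\varphi'(\sigma)\|_*^2$. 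The role of the PS-condition is precisely to guarantee that, for $\varepsilon$ small, $\|\varphi'\|_*$ is bounded below by some $\delta > 0$ on the strip (otherwise a minimizing PS-sequence would converge to a critical point at level $c$, contradicting $K_c = \emptyset$); this lower bound makes each trajectory traverse the strip in bounded time and delivers the inclusion $\eta(\varphi^{c+\varepsilon}) \subseteq \varphi^{c-\varepsilon}$. The two steps above are routine once this compactness-driven construction is in place.
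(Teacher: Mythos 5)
This is Theorem \ref{th1}, the Ambrosetti--Rabinowitz mountain pass theorem, which the paper states as background and does not prove (it is quoted from \cite{3}), so there is no in-paper argument to compare against. Your proposal is the standard and correct proof: the intermediate value theorem gives $c\geq m_\rho$, and the contradiction via the quantitative deformation lemma --- built from a locally Lipschitz pseudo-gradient field, with the PS-condition supplying the lower bound $\|\varphi'\|_*\geq\delta$ on the strip $\varphi^{-1}([c-\varepsilon,c+\varepsilon])$ when $K_\varphi^c=\emptyset$ --- is exactly how the classical result is established; the only routine detail left implicit is normalizing the vector field so the flow is globally defined.
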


In the analysis of problem \eqref{eqP} we will use the Sobolev space $W^{1,p}_{0}(\Omega)$ and the ordered Banach space
$$C^1_0(\overline{\Omega})=\{u\in C^1(\overline{\Omega}):u|_{\partial\Omega}=0\}.$$

By $||\cdot||$ we denote the norm of $W^{1,p}_{0}(\Omega)$. Using the Poincar\'e inequality we can say that
$$||u||=||Du||_p\ \mbox{for all}\ u\in W^{1,p}_{0}(\Omega).$$

The positive (order) cone of $C^1_0(\overline{\Omega})$ is
$$C_+=\{u\in C^1_0(\overline{\Omega}):u(z)\geq 0\ \mbox{for all}\ z\in\overline{\Omega}\}.$$

This cone has a nonempty interior given by
$$D_+=\{u\in C_+:u(z)>0\ \mbox{for all}\ z\in\Omega,\left.\frac{\partial u}{\partial n}\right|_{\partial\Omega}<0\},$$
 where $n(\cdot)$ denotes the outward unit normal on $\partial\Omega$.

Suppose that $\hat{f}:\Omega\times\RR\rightarrow\RR$ is a Carath\'eodory function satisfying
$$|\hat{f}(z,x)|\leq a(z)(1+|x|^{r-1})\ \mbox{for almost all}\ z\in\Omega,\ \mbox{and all}\ x\in\RR,$$
with $a\in L^{\infty}(\Omega)_+$, $1<r\leq p^*$, where $p^*=\left\{\begin{array}{ll}
\frac{Np}{N-p}&\mbox{if}\ p<N\\
+\infty&\mbox{if}\ N\leq p
\end{array}\right.$ (the Sobolev critical exponent). We set $\hat{F}(z,x)=\int^x_0\hat{f}(z,s)ds$ and consider the $C^1$-functional $\hat{\varphi}:W^{1,p}_{0}(\Omega)\rightarrow\RR$ defined by
$$\hat{\varphi}(u)=\frac{1}{p}||Du||^p_p+\frac{1}{2}||Du||^2_2-\int_{\Omega}\hat{F}(z,u(z))dz\ \mbox{for all}\ u\in W^{1,p}_{0}(\Omega).$$

The next result is essentially an outgrowth of the nonlinear regularity theory (see Lieberman \cite{15}) and can be found in Gasinski and Papageorgiou \cite{11} (the subcritical case) and Papageorgiou and R\u adulescu \cite{19} (the critical case).
\begin{prop}\label{prop2}
	Let $\hat{u}\in W^{1,p}_{0}(\Omega)$ be a local $C^1_0(\overline{\Omega})$-minimizer of $\hat{\varphi}$, that is, there exists $\rho_0>0$ such that
		$$\hat{\varphi}(\hat{u})\leq\hat{\varphi}(\hat{u}+h)\ \mbox{for all}\ h\in C^1_0(\overline{\Omega})\ \mbox{with}\ ||h||_{C^1_0(\overline{\Omega})}\leq\rho_0.$$
		Then $\hat{u}\in C^{1,\alpha}_0(\overline{\Omega})$ for some $\alpha\in(0,1)$ and $\hat{u}$ is also a local $W^{1,p}_{0}(\Omega)$-minimizer of $\hat{\varphi}$, that is, there exists $\rho_1>0$ such that
		$$\hat{\varphi}(\hat{u})\leq\hat{\varphi}(\hat{u}+h)\ \mbox{for all}\ h\in W^{1,p}_{0}(\Omega)\ \mbox{with}\ ||h||\leq\rho_1.$$
\end{prop}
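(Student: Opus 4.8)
The plan is to establish the two assertions in order: first the regularity $\hat{u}\in C^{1,\alpha}_0(\overline{\Omega})$, and then the implication that a local $C^1_0(\overline{\Omega})$-minimizer is a local $W^{1,p}_0(\Omega)$-minimizer. The second part is the Brezis--Nirenberg type equivalence of minimizers, adapted here to the nonhomogeneous $(p,2)$-operator $-\Delta_p-\Delta$; I would prove it by contradiction through an auxiliary constrained minimization, as in Garc\'ia-Azorero, Manfredi and Peral for the $p$-Laplacian, the technical engine being the uniform regularity estimates of Lieberman \cite{15}.

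For the regularity, I would first note that local $C^1_0(\overline{\Omega})$-minimality forces $\hat{u}$ to be a weak solution of the Euler equation. Indeed, for fixed $h\in C^1_0(\overline{\Omega})$ and $|t|$ small the perturbations $\hat{u}+th$ lie in the $C^1$-ball of radius $\rho_0$, so $t\mapsto\hat{\varphi}(\hat{u}+th)$ has a minimum at $t=0$; differentiating gives $\langle\hat{\varphi}'(\hat{u}),h\rangle=0$ for all $h\in C^1_0(\overline{\Omega})$, and by density for all $h\in W^{1,p}_0(\Omega)$. Thus $-\Delta_p\hat{u}-\Delta\hat{u}=\hat{f}(z,\hat{u})$ weakly. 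The growth bound $|\hat{f}(z,x)|\le a(z)(1+|x|^{r-1})$ with $a\in L^\infty(\Omega)_+$ and $r\le p^*$ then feeds into the nonlinear regularity theory: an $L^\infty$-bound for $\hat{u}$ (by Moser iteration in the subcritical case, cf. \cite{11}, and by the refined argument of \cite{19} in the critical case $r=p^*$), followed by Lieberman's interior and boundary $C^{1,\alpha}$-estimates \cite{15}, yields $\hat{u}\in C^{1,\alpha}_0(\overline{\Omega})$ for some $\alpha\in(0,1)$.

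For the equivalence, suppose the conclusion fails, so $\hat{u}$ is not a local $W^{1,p}_0(\Omega)$-minimizer and there is a sequence $u_n\to\hat{u}$ in $W^{1,p}_0(\Omega)$ with $\hat{\varphi}(u_n)<\hat{\varphi}(\hat{u})$. For each $n$, minimize $\hat{\varphi}$ over the closed ball $\overline{B}_n=\{u\in W^{1,p}_0(\Omega):\|u-\hat{u}\|\le 1/n\}$. A minimizer $y_n$ exists since $\overline{B}_n$ is bounded, closed and convex (hence weakly compact by reflexivity) and $\hat{\varphi}$ is sequentially weakly lower semicontinuous: the principal part $\frac{1}{p}\|D\cdot\|^p_p+\frac{1}{2}\|D\cdot\|^2_2$ is convex and continuous, and the reaction integral is weakly continuous via the compact embedding $W^{1,p}_0(\Omega)\hookrightarrow L^r(\Omega)$ in the subcritical range (the critical case again using \cite{19}). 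Once $\|u_n-\hat{u}\|\le 1/n$ we get $\hat{\varphi}(y_n)\le\hat{\varphi}(u_n)<\hat{\varphi}(\hat{u})$, so $y_n\neq\hat{u}$ and $\|y_n-\hat{u}\|=1/n$. By the Lagrange multiplier rule there is $\mu_n\le 0$ with
$$\langle A_p(y_n),h\rangle+\langle A(y_n),h\rangle-\intom \hat{f}(z,y_n)h\,dz=\mu_n\langle A_p(y_n-\hat{u}),h\rangle$$
for all $h\in W^{1,p}_0(\Omega)$, where $A_p$ and $A$ denote the $p$-Laplacian and the Laplacian operators; equivalently, $y_n$ solves a perturbed Dirichlet problem whose right-hand side is controlled by $\hat{f}(z,y_n)$ and the multiplier term.

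The crux---and the step I expect to be the main obstacle---is to upgrade the convergence $y_n\to\hat{u}$ in $W^{1,p}_0(\Omega)$ (immediate from $\|y_n-\hat{u}\|\le 1/n$) to convergence in $C^1_0(\overline{\Omega})$. This requires a uniform bound $\|y_n\|_{C^{1,\beta}_0(\overline{\Omega})}\le M$ independent of $n$, which is delicate because of the extra term $\mu_n A_p(y_n-\hat{u})$ carrying the unknown multiplier: one must show that this term does not destroy the ellipticity structure and that the data remain bounded in the norms required by Lieberman's estimates, using here the already-established regularity of the fixed function $\hat{u}$ together with an $L^\infty$-bound on $y_n$ uniform in $n$. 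Granting the uniform $C^{1,\beta}$-bound, the compact embedding $C^{1,\beta}_0(\overline{\Omega})\hookrightarrow C^1_0(\overline{\Omega})$ forces $y_n\to\hat{u}$ in $C^1_0(\overline{\Omega})$. But then $\|y_n-\hat{u}\|_{C^1_0(\overline{\Omega})}\le\rho_0$ for large $n$ while $\hat{\varphi}(y_n)<\hat{\varphi}(\hat{u})$, contradicting the local $C^1_0(\overline{\Omega})$-minimality of $\hat{u}$. This contradiction completes the proof.
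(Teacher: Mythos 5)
The paper itself gives no proof of Proposition \ref{prop2}; it is imported verbatim from Gasinski--Papageorgiou \cite{11} (subcritical case) and Papageorgiou--R\u adulescu \cite{19} (critical case), and your outline reproduces exactly the argument of those references: Euler equation from the one-parameter perturbations $t\mapsto\hat{\varphi}(\hat{u}+th)$, then $L^\infty$-bound plus Lieberman $C^{1,\alpha}$-theory for the regularity assertion; contradiction via constrained minimization over shrinking $W^{1,p}_0$-balls, Lagrange multipliers, and uniform $C^{1,\beta}$-estimates for the equivalence of minimizers. So the route is the intended one. One harmless inaccuracy: the minimizer $y_n$ of $\hat{\varphi}$ over $\overline{B}_n$ need not satisfy $\|y_n-\hat{u}\|=1/n$; it may lie in the interior of the ball, in which case $\mu_n=0$ and the rest of the argument is unaffected, so the equality should not be asserted.

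The genuine gap is the step you yourself flag and then skip with ``Granting the uniform $C^{1,\beta}$-bound\dots'': this is where essentially all the content of the proposition lives, and without it the proposal is a road map rather than a proof. To close it one must write the Euler--Lagrange identity for $y_n$ in divergence form, $-{\rm div}\,a_n(z,Dy_n)=\hat{f}(z,y_n)$ with
$$a_n(z,\xi)=|\xi|^{p-2}\xi+\xi+|\mu_n|\,|\xi-D\hat{u}(z)|^{p-2}(\xi-D\hat{u}(z))$$
(recall $\mu_n\leq 0$), normalize by $1+|\mu_n|$ so that the right-hand side stays bounded even if $\mu_n\to-\infty$, and verify that the maps $a_n$ satisfy Lieberman's structure conditions \cite{15} with constants independent of $n$; this uses crucially that $D\hat{u}\in C^{0,\alpha}(\overline{\Omega})$ is already known from the first part of the proposition (so the ``shift'' $D\hat{u}(z)$ in the degenerate term is a fixed H\"older-continuous coefficient) together with a Moser-type $L^\infty$-bound on $y_n$ uniform in $n$. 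The verification of these structure conditions for the shifted, $\mu_n$-dependent operator is the delicate computation carried out in \cite{11} and \cite{19}, and a complete proof must include it.
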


As we have already mentioned in the introduction, some of our tools in the analysis of problem \eqref{eqP}, are the comparison results for such equations. One such result can be found in Filippakis, O'Regan and Papageorgiou \cite{8} and is an extension of a result for $p$-Laplacian equations due to Arcoya and Ruiz \cite[Proposition 2.6]{4}. First, let us introduce some notations. Given $h_1,h_2\in L^{\infty}(\Omega)$, we say that $h_1\prec h_2$ if and only if for every compact $K\subseteq \Omega$, we can find $\epsilon=\epsilon(K)>0$ such that
$$h_1(z)+\epsilon\leq h_2(z)\ \mbox{for almost all}\ z\in K.$$

Note that if $h_1,h_2\in C(\Omega)$ and $h_1(z)<h_2(z)$ for all $z\in\Omega$, then $h_1\prec h_2$.
\begin{prop}\label{prop3}
 If $\hat{\xi}\geq 0,\ h_1,h_2\in L^{\infty}(\Omega)$ with $h_1\prec h_2$ and $u\in C^1_0(\overline{\Omega})$, $v\in D_+$ satisfy
\begin{eqnarray*}
	&&-\Delta_pu-\Delta u+\hat{\xi}|u|^{p-2}u=h_1\ \mbox{in}\ \Omega,\\
	&&-\Delta_pv-\Delta v+\hat{\xi}v^{p-1}=h_2\ \mbox{in}\ \Omega,
\end{eqnarray*}
then $v-u\in D_+$.
\end{prop}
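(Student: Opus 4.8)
The plan is to argue in two stages. First I would establish the weak comparison $u\le v$ in $\Omega$, and then upgrade it to the strong conclusion $v-u\in D_+$ by linearizing the difference of the two equations and applying the maximum principle for uniformly elliptic operators. The presence of the Laplacian term alongside the $p$-Laplacian is what makes the second stage work, and this is the structural feature I would exploit throughout.

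For the weak comparison, note first that since $v\in D_+$ we have $v>0$ in $\Omega$, so $v^{p-1}=|v|^{p-2}v$. I would subtract the two equations and test the resulting identity with $(u-v)^+\in W^{1,p}_0(\Omega)$. On the set $\{u>v\}$ one has $u>v>0$, so the three contributions on the left are each nonnegative: the $p$-Laplacian term by the monotonicity of $\xi\mapsto|\xi|^{p-2}\xi$, the Laplacian term because it equals $\int_{\{u>v\}}|D(u-v)|^2\,dz$, and the reaction term $\hat\xi\int_{\{u>v\}}(u^{p-1}-v^{p-1})(u-v)\,dz$ because $t\mapsto t^{p-1}$ is increasing and $\hat\xi\ge 0$. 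The right-hand side equals $\int_\Omega(h_1-h_2)(u-v)^+\,dz\le 0$, since $h_1\prec h_2$ forces $h_1\le h_2$. Hence every term vanishes; in particular $\|D(u-v)^+\|_2=0$, and as $(u-v)^+\in W^{1,p}_0(\Omega)$ this yields $(u-v)^+=0$, that is, $u\le v$.

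For the strong comparison, set $y=v-u\ge 0$, which lies in $C^1_0(\overline\Omega)$. Applying the mean value theorem to $\xi\mapsto|\xi|^{p-2}\xi$ and to $t\mapsto|t|^{p-2}t$, I would recast the difference of the two equations as the single linear divergence-form equation
\[
-\operatorname{div}\bigl(a(z)\,Dy\bigr)+\hat\xi\,c(z)\,y=h_2-h_1\quad\text{in }\Omega,
\]
where $a(z)$ is the symmetric positive semidefinite matrix obtained by integrating the Jacobian of $\xi\mapsto|\xi|^{p-2}\xi$ over the segment from $Du(z)$ to $Dv(z)$, augmented by the identity matrix $I$ arising from the Laplacian, and $c(z)=(p-1)\int_0^1|u+ty|^{p-2}\,dt\ge 0$. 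The decisive observation is that this added identity forces $a(z)\ge I$ in the sense of quadratic forms, so that $L:=-\operatorname{div}(a(z)D\,\cdot)+\hat\xi\,c(z)$ is uniformly elliptic even at the points where $Du$ and $Dv$ vanish and the $p$-Laplacian degenerates; moreover, since $u,v\in C^1_0(\overline\Omega)$ and $p>2$, both $a$ and $c$ are bounded.

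Finally, because $h_1\prec h_2$ we have $h_2-h_1\ge 0$, so $y\ge 0$ is a weak supersolution of $L$, an operator with nonnegative zeroth-order coefficient. If $y$ vanished at an interior point, the strong maximum principle (via the weak Harnack inequality for such operators) would force $y\equiv 0$ on the connected domain $\Omega$; but $y\equiv 0$ is impossible, since it would give $h_2-h_1=0$ a.e., contradicting $h_2-h_1\ge\epsilon(K)>0$ on every compact $K\subseteq\Omega$. Hence $y>0$ throughout $\Omega$, and Hopf's boundary point lemma (valid since $\partial\Omega$ is $C^2$ and $y\in C^1_0(\overline\Omega)$) gives $\partial y/\partial n<0$ on $\partial\Omega$. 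Therefore $v-u=y\in D_+$. The hard part, and the crux of the whole argument, is the verification in the third step that $L$ is genuinely uniformly elliptic with bounded coefficients: this relies entirely on the Laplacian supplying ellipticity exactly on the degeneracy set of the $p$-Laplacian, together with the fact that the strong maximum principle and Hopf's lemma remain valid for weak (super)solutions of such operators.
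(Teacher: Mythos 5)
The paper does not actually prove Proposition \ref{prop3}; it quotes it from Filippakis, O'Regan and Papageorgiou \cite{8}, so there is no in-paper proof to compare against. Your two-stage argument is correct and is essentially the standard proof of such comparison results. The weak comparison obtained by testing the difference of the equations with $(u-v)^+$ is sound (and indeed $h_1\prec h_2$ implies $h_1\le h_2$ a.e.\ on $\Omega$, since $\Omega$ is exhausted by compacta). The second stage --- rewriting the difference of the two equations as a linear divergence-form equation whose coefficient matrix contains the identity contributed by the Laplacian, so that the operator stays uniformly elliptic on the degeneracy set of the $p$-Laplacian --- is precisely the structural point the paper itself exploits in the proof of Proposition \ref{prop12}, where the inequality $(\nabla a(y)\xi',\xi')_{\RR^N}\ge|\xi'|^2$ is used to invoke the tangency principle and the boundary point theorem of Pucci and Serrin \cite{22}. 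The one place where you are slightly too casual is the final appeal to Hopf's lemma: for \emph{divergence-form} operators the boundary point principle is not a consequence of uniform ellipticity and boundedness of the coefficients alone (it can fail for merely bounded, or even continuous, leading coefficients); Dini continuity suffices. This is available here, because $u$ and $v$ solve equations with $L^\infty$ right-hand sides and hence lie in $C^{1,\alpha}_0(\overline{\Omega})$ by Lieberman's regularity theory \cite{15}, so your matrix $a(z)$ is H\"older continuous; alternatively one sidesteps the issue by applying the Pucci--Serrin tangency principle and boundary point theorem directly to the quasilinear operator ${\rm div}\,(|Dw|^{p-2}Dw+Dw)$, as the paper does in Proposition \ref{prop12}. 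With that remark supplied, your proof is complete.
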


To produce a sequence of distinct smooth nodal solution, we will use an abstract result of Kajikiya \cite{14}, which is an extension of the symmetric mountain pass theorem.
\begin{theorem}\label{th4}
	Let $X$ be a Banach space. Assume that $\varphi\in C^1(X,\RR)$ satisfies the PS-condition, is even, bounded below, $\varphi(0)=0$ and for every $n\in\NN$ there exist an $n$-dimensional subspace $V_n$ of $X$ and $\rho_n>0$ such that
	$$\sup[\varphi(u):u\in V_n,||u||=\rho_n]<0.$$
	Then there exists a sequence $\{u_n\}_{n\geq 1}$ of critical points of $\varphi$ (that is, $\varphi'(u_n)=0$ for all $n\in\NN$) such that $u_n\rightarrow 0$ in $X$.
\end{theorem}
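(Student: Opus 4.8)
The plan is to prove this by the genus-based minimax scheme that underlies the symmetric mountain pass theorem, refined so that the critical points are driven towards the origin. First I would recall the Krasnoselskii genus: for a closed symmetric set $A\subseteq X\setminus\{0\}$ (that is, $A=-A$ and $0\notin A$) let $\gamma(A)$ be the least $n\in\NN$ for which there is an odd continuous map $A\to\RR^n\setminus\{0\}$, with $\gamma(A)=+\infty$ if no such $n$ exists and $\gamma(\emptyset)=0$. I would use the standard properties: monotonicity, subadditivity $\gamma(A\cup B)\le\gamma(A)+\gamma(B)$, invariance under odd homeomorphisms, the normalization that $\gamma(A)=n$ when $A$ is oddly homeomorphic to a sphere $S^{n-1}$, and the fact that a compact symmetric set avoiding the origin has finite genus and possesses a symmetric open neighborhood of the same genus. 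Note that since $\varphi'$ is odd, $\varphi'(0)=0$ automatically, so $0$ is always a critical point at level $\varphi(0)=0$.

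Next I would set $\Gamma_k=\{A\subseteq X:A\ \text{closed, symmetric},\ 0\notin A,\ \gamma(A)\ge k\}$ and define the minimax levels
\[ c_k=\inf_{A\in\Gamma_k}\ \sup_{u\in A}\varphi(u). \]
Three properties must be checked. For negativity, given $n$ the set $S_n=\{u\in V_n:\|u\|=\rho_n\}$ is compact, symmetric, avoids $0$ and is oddly homeomorphic to $S^{n-1}$, so $\gamma(S_n)=n$; the hypothesis gives $\sup_{S_n}\varphi<0$, whence $S_n\in\Gamma_n$ and $c_n<0$. Boundedness below of $\varphi$ yields $c_k\ge\inf_X\varphi>-\infty$, and $\Gamma_{k+1}\subseteq\Gamma_k$ gives $c_k\le c_{k+1}$; thus $-\infty<c_1\le c_2\le\cdots<0$. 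Then each $c_k$ is a critical value and the multiplicity is controlled by the genus: using the PS-condition (so that $K_c=\{u:\varphi(u)=c,\ \varphi'(u)=0\}$ is compact) together with the \emph{equivariant} deformation lemma (available because $\varphi$ is even, so a pseudo-gradient vector field can be chosen odd), the usual Lusternik--Schnirelmann argument shows that if $c=c_k=\cdots=c_{k+m-1}<0$ then $\gamma(K_c)\ge m$. Concretely, if the genus were smaller one engulfs $K_c$ in a symmetric neighborhood $U$ of equal genus and deforms $\{\varphi\le c+\epsilon\}\setminus U$ oddly into $\{\varphi\le c-\epsilon\}$, turning a competitor of genus $\ge k+m-1$ into one of genus $\ge k$ below the level $c$, contradicting the definition of $c_k$.

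I would then prove the convergence $c_k\uparrow 0$. For fixed $\epsilon>0$ the set $K^{\le-\epsilon}=\{u:\varphi'(u)=0,\ \inf_X\varphi\le\varphi(u)\le-\epsilon\}$ is compact by the PS-condition, symmetric, and avoids the origin, hence has finite genus $N_\epsilon$. The same multiplicity principle gives that $c_k\le-\epsilon$ forces $\gamma(K^{\le-\epsilon})\ge k$ (flow the competing sets down by the negative gradient until they concentrate near the critical set of levels $\le-\epsilon$, and compare genera); contrapositively $c_k>-\epsilon$ once $k>N_\epsilon$. Since $\epsilon>0$ is arbitrary and each $c_k<0$, this gives $c_k\uparrow 0^-$, and in particular there are infinitely many nonzero critical points with negative critical values accumulating at $0$.

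The main obstacle is the final step: upgrading convergence of the critical \emph{values} to convergence of the critical \emph{points} to the origin. Choosing $u_k\in K_{c_k}$ we have $\varphi(u_k)=c_k\to0$ and $\varphi'(u_k)=0$, so $\{u_k\}$ is a PS-sequence and is therefore precompact, with every cluster point a critical point at level $0$. The delicate point is that such a cluster point need not be the origin, so one cannot simply read off $u_n\to0$; this is exactly the refinement of Kajikiya over the classical theorem of Clark, which only yields $\varphi(u_n)\to0^-$. To close the gap I would argue that the nonzero critical points at negative levels cannot be bounded away from $0$: invoking the evenness (to keep all constructions odd) and localizing the genus near the origin, one shows that if all negative-level critical points lay in $\{\|u\|\ge\delta\}$ then the genus available to the competing sets at levels close to $0$ would be capped, contradicting $c_k\uparrow0$ with $\gamma\ge k\to\infty$. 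Extracting from $\{u\neq0:\varphi'(u)=0,\ \varphi(u)<0\}$ a sequence tending to the origin then produces the desired sequence of critical points $u_n\to0$ in $X$. I expect this localization near $0$, rather than the minimax construction itself, to be where the real work lies.
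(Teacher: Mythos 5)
The paper does not prove Theorem \ref{th4}: it is quoted from Kajikiya \cite{14} as a known extension of the symmetric mountain pass theorem, so there is no in-paper proof to compare against and your proposal has to stand on its own as a proof of Kajikiya's result. Your framework --- Krasnoselskii genus, the classes $\Gamma_k$, the levels $c_k$, the verification that $-\infty<c_1\le c_2\le\cdots<0$ and that each $c_k$ is a critical value --- is the correct starting point and is exactly the Clark-theorem part of the argument. Two things go wrong after that. First, a repairable slip: the implication ``$c_k\le-\epsilon$ for all $k$ forces $\gamma(K^{\le-\epsilon})\ge k$'' is false. A symmetric compact set consisting of $k$ antipodal pairs $\{\pm u_1,\dots,\pm u_k\}$ has genus $1$ (send $\pm u_j$ to $\pm 1$), so a functional whose negative-level critical points are isolated pairs sitting at distinct levels has $\gamma(K^{\le-\epsilon})=1$ no matter how many minimax levels lie below $-\epsilon$; the minimax theory yields ``at least $k$ distinct pairs of critical points'', not ``genus at least $k$''. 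The standard correct route to $c_k\uparrow 0$ is to suppose $c_k\uparrow\bar c<0$, excise from a near-optimal $A\in\Gamma_{k+N}$ a symmetric neighborhood $U$ of $K_{\bar c}$ with $\gamma(\overline U)=\gamma(K_{\bar c})=N<\infty$, and push $\overline{A\setminus U}$ below level $\bar c-\epsilon$ by an odd deformation, contradicting $c_k>\bar c-\epsilon$.

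Second, and decisively, the last step --- upgrading convergence of the critical values $c_k\to 0^-$ to a sequence of critical \emph{points} tending to $0$ --- is precisely the content of Kajikiya's theorem beyond Clark's, and your sketch does not close it. The proposed mechanism (``if all negative-level critical points lie in $\{\|u\|\ge\delta\}$ then the genus available to competitors near level $0$ is capped'') has no teeth: a competitor $A\in\Gamma_k$ with $\sup_A\varphi$ close to $0$ need not be anywhere near the critical set, and the only device that transfers a genus bound from the critical set to the competitors is excision plus odd deformation. That device fails at level $0$ because the origin itself is a critical point at level $0$: one cannot delete a neighborhood of $0$ from $A$ with a controlled genus drop (genus is undefined for sets containing the origin, and spheres about $0$ in an infinite-dimensional space have infinite genus), nor can one deform across level $0$ past the critical point $0$. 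Kajikiya resolves this with a genuine dichotomy --- either negative-level critical points accumulate at $0$, or one manufactures nonzero critical points \emph{at level} $0$ accumulating at $0$ --- via a quantitative deformation localized near the origin. As written, your argument establishes Clark's theorem (critical values tending to $0^-$ and infinitely many pairs of critical points) but not the convergence $u_n\to 0$ in $X$, which is the feature that the application in Theorem \ref{th17} actually requires.
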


For $q\in(1,\infty)$, let $A_q:W^{1,q}_{0}(\Omega)\rightarrow W^{-1,q'}(\Omega)^*=W^{1,q}_{0}(\Omega)^*$ (with $\frac{1}{q}+\frac{1}{q'}=1$) be the nonlinear map defined by
$$\left\langle A_q(u),h\right\rangle=\int_{\Omega}|Du|^{q-2}(Du,Dh)_{\RR^N}dz\ \mbox{for all}\ u,h\in W^{1,q}_{0}(\Omega).$$

When $q=2$, we set $A_2=A$ and we have $A\in\mathcal{L}(H^1_0(\Omega),H^{-1}(\Omega))$. For the general map $A_q$ we have the following result summarizing its properties (see Gasinski and Papageorgiou \cite[p. 746]{10}).
\begin{prop}\label{prop5}
	The map $A_q:W^{1,q}_{0}(\Omega)\rightarrow W^{-1,q'}(\Omega)$ is strictly monotone, continuous (hence maximal monotone, too) and of type $(S)_+$, that is, if $u_n\stackrel{w}{\rightarrow}u$ in $W^{1,q}_{0}(\Omega)$ and $\limsup\limits_{n\rightarrow\infty}\left\langle A_q(u_n),u_n-u\right\rangle\leq 0$, then $u_n\rightarrow u$ in $W^{1,q}_{0}(\Omega)$.
\end{prop}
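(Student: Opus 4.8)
The plan is to establish the three asserted properties in turn, devoting most of the work to the $(S)_+$ condition, which is the only substantive point. The algebraic backbone throughout is the family of elementary vector inequalities for the map $\xi\mapsto|\xi|^{q-2}\xi$ on $\RR^N$: for $q\geq 2$ there is $c_q>0$ with $(|a|^{q-2}a-|b|^{q-2}b,a-b)_{\RR^N}\geq c_q|a-b|^q$, while for $1<q<2$ one has only the degenerate bound $(|a|^{q-2}a-|b|^{q-2}b,a-b)_{\RR^N}\geq c_q|a-b|^2/(|a|+|b|)^{2-q}$. In either regime the left-hand side is pointwise nonnegative and vanishes exactly when $a=b$.

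Strict monotonicity is then immediate: writing $\langle A_q(u)-A_q(v),u-v\rangle=\intom(|Du|^{q-2}Du-|Dv|^{q-2}Dv,Du-Dv)_{\RR^N}dz$, the integrand is nonnegative, so the bracket is $\geq 0$, and if it vanishes then $Du=Dv$ a.e., whence the Poincar\'e inequality forces $u=v$. For continuity I would note that $u_n\to u$ in $W^{1,q}_0(\Omega)$ gives $Du_n\to Du$ in $L^q(\Omega,\RR^N)$, and the Nemytskii operator attached to the continuous map $\xi\mapsto|\xi|^{q-2}\xi$ (which obeys $||\xi|^{q-2}\xi|=|\xi|^{q-1}$) is continuous from $L^q(\Omega,\RR^N)$ into $L^{q'}(\Omega,\RR^N)$; a single application of H\"older then gives $|\langle A_q(u_n)-A_q(u),h\rangle|\leq\||Du_n|^{q-2}Du_n-|Du|^{q-2}Du\|_{q'}\|Dh\|_q$ for every $h$, so $A_q(u_n)\to A_q(u)$ in $W^{-1,q'}(\Omega)$. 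Since $A_q$ is everywhere defined, monotone and continuous, hence hemicontinuous, the Browder--Minty theorem yields maximal monotonicity.

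The substantive part is the $(S)_+$ property. Assume $u_n\stackrel{w}{\rightarrow}u$ in $W^{1,q}_0(\Omega)$ and $\limsup_n\langle A_q(u_n),u_n-u\rangle\leq 0$. The first step is to upgrade this to a genuine limit: by monotonicity $\langle A_q(u_n)-A_q(u),u_n-u\rangle\geq 0$, and since $A_q(u)$ is a fixed dual element while $u_n-u\stackrel{w}{\rightarrow}0$, the term $\langle A_q(u),u_n-u\rangle\to 0$; combining these forces $\langle A_q(u_n)-A_q(u),u_n-u\rangle\to 0$, that is, $\intom e_n\,dz\to 0$ where $e_n=(|Du_n|^{q-2}Du_n-|Du|^{q-2}Du,Du_n-Du)_{\RR^N}\geq 0$.

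From here the two ranges of $q$ diverge, and the subquadratic case $1<q<2$ is where I expect the main difficulty, because the controlling inequality degenerates. For $q\geq 2$ the conclusion is direct: $c_q\|Du_n-Du\|_q^q\leq\intom e_n\,dz\to 0$, so $u_n\to u$ in $W^{1,q}_0(\Omega)$. For $1<q<2$ I would instead insert the degenerate inequality and apply H\"older with exponents $2/q$ and $2/(2-q)$ to obtain
$$\intom|Du_n-Du|^q\,dz\leq\left(\intom\frac{|Du_n-Du|^2}{(|Du_n|+|Du|)^{2-q}}\,dz\right)^{q/2}\left(\intom(|Du_n|+|Du|)^q\,dz\right)^{(2-q)/2}.$$
The first factor is bounded above by $(c_q^{-1}\intom e_n\,dz)^{q/2}\to 0$, while the second is bounded because a weakly convergent sequence is norm-bounded in $W^{1,q}_0(\Omega)$; hence $\|Du_n-Du\|_q\to 0$ and again $u_n\to u$ strongly. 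All of this is classical, and the write-up amounts to recording which vector inequality is invoked in each regime.
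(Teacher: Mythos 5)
Your proof is correct. Note that the paper itself gives no argument for this proposition --- it is quoted directly from Gasinski and Papageorgiou \cite[p.~746]{10} --- and what you have written is essentially the standard proof found in such references: strict monotonicity and the $(S)_+$ property from the Simon-type vector inequalities for $\xi\mapsto|\xi|^{q-2}\xi$ (split according to $q\geq 2$ or $1<q<2$), continuity via the Nemytskii operator from $L^q(\Omega,\RR^N)$ to $L^{q'}(\Omega,\RR^N)$, and maximal monotonicity from the Minty--Browder criterion for everywhere-defined monotone hemicontinuous operators. The only point worth flagging is cosmetic: in the subquadratic regime the degenerate inequality and the subsequent H\"older step should be read with the convention that the quotient vanishes on the set where $Du_n=Du=0$, which is harmless since both sides are zero there; with that understood, your two-regime argument for $(S)_+$ is complete.
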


Another tool that we use in the analysis of problem \eqref{eqP} is the Morse theory (critical groups). So, let $X$ be a Banach space and $(Y_1,Y_2)$ a topological pair such that $Y_2\subseteq Y_1\subseteq X$. For every $k\in\NN_0$, we denote by $H_k(Y_1,Y_2)$  the $k$th singular homology group with integer coefficients for the pair $(Y_1,Y_2)$. Recall that for $k\in-\NN$ we have $H_k(Y_1,Y_2)=0$. Suppose that $\varphi\in C^1(X,\RR)$ and $c\in\RR$. We introduce the following sets:
\begin{eqnarray*}
	&&K_{\varphi}=\{u\in X:\varphi'(u)=0\},\\
	&&K^c_{\varphi}=\{u\in K_{\varphi}:\varphi(u)=c\},\\
	&&\varphi^c=\{u\in X:\varphi(u)\leq c\}.
\end{eqnarray*}

Let $u\in K^c_{\varphi}$ be isolated. The critical groups of $\varphi$ at $u$ are defined by
$$C_k(\varphi,u)=H_k(\varphi^c\cap U,\varphi^c\cap U\backslash\{u\})\ \mbox{for all}\ k\in\NN_0,$$
where $U$ is a neighborhood of $u$ such that $K_{\varphi}\cap\varphi^c\cap U=\{u\}$. The excision property of singular homology implies that the above definition is independent of the choice of the neighborhood of $U$.

Assume that $\varphi\in C^1(X,\RR)$ satisfies the PS-condition and $\inf\varphi(K_{\varphi})>-\infty$. Let $c<\inf\varphi(K_{\varphi})$. The critical groups of $\varphi$ at infinity are defined by
$$C_k(\varphi,\infty)=H_k(X,\varphi^c)\ \mbox{for all}\ k\in\NN_0.$$

The definition is independent of the choice of the level $c<\inf\varphi(K_{\varphi})$. Indeed, let $c'<\inf \varphi(K_{\varphi})$ be another such level. We assume that $c'<c$. Then we know that $\varphi^{c'}$ is a strong deformation retract of $\varphi^c$ (see Gasinski and Papageorgiou \cite[p. 628]{10}). So, we have
$$H_k(X,\varphi^c)=H_k(X,\varphi^{c'})\ \mbox{for all}\ k\in\NN_0$$
(see Motreanu, Motreanu and Papageorgiou \cite[p. 145]{16}).

Suppose that $K_{\varphi}$ is finite. We introduce the following formal series
\begin{eqnarray*}
	&&M(t,u)=\sum_{k\geq 0}{\rm rank}\, C_k(\varphi,u)t^k\ \mbox{for all}\ t\in\RR,\ \mbox{all}\ u\in K_{\varphi}\\
	&\mbox{and}&P(t,\infty)=\sum_{k\geq 0} {\rm rank}\, C_k(\varphi,\infty)t^k\ \mbox{for all}\ t\in\RR.
\end{eqnarray*}

These quantities are related via the Morse relation, which says that

\begin{eqnarray}\label{eq1}
	\sum_{u\in K_{\varphi}}M(t,u)=P(t,\infty)+(1+t)Q(t)\ \mbox{for all}\ t\in\RR,
\end{eqnarray}
with $Q(t)=\sum_{k\geq 0}\beta_kt^k$ being a formal series in $t\in\RR$ with nonnegative integer coefficients $\beta_k$.

Finally, let us introduce some basic notations that we will use in the sequel. Given $x\in\RR$, we set $x^{\pm}=\max\{\pm x,0\}$. Then for $u\in W^{1,p}_{0}(\Omega)$, we define
$$u^{\pm}(\cdot)=u(\cdot)^{\pm}.$$

We know that
$$u^{\pm}\in W^{1,p}_{0}(\Omega),u=u^+-u^-\ \mbox{and}\ |u|=u^++u^-.$$

For  a measurable function $g:\Omega\times\RR\rightarrow\RR$ (for example, for a Carath\'eodory function $g(\cdot,\cdot)$), we denote by $N_g(\cdot)$  the Nemitsky (superposition) operator associated with $g$, that is,
$$N_g(u)(\cdot)=g(\cdot,u(\cdot))\ \mbox{for all}\ u\in W^{1,p}_{0}(\Omega).$$

Evidently, the mapping $z\mapsto N_g(u)(z)$ is measurable.

\section{Multiplicity Theorems}

In this section we prove two multiplicity theorems for problem \eqref{eqP} when $\lambda>0$ is big. In both theorems we provide precise sign information for all  solutions. Our method of proof is based on a cut-off technique first used by Costa and Wang \cite{6} in the context of semilinear Dirichlet problems driven by the Laplacian.

The hypotheses on the reaction term $f(z,x)$ are the following:

\smallskip
$H_1:$ $f:\Omega\times\RR\rightarrow\RR$ is a Carath\'eodory function such that $f(z,0)=0$ for almost all $z\in\Omega$ and
\begin{itemize}
	\item[(i)] there exists $r\in(p,p^*)$ such that $\lim\limits_{x\rightarrow 0}\frac{f(z,x)}{|x|^{r-2}x}=0$ uniformly for almost all $z\in\Omega$,
	\item[(ii)] if $F(z,x)=\int^x_0f(z,s)ds$, then there exists $\beta\in(r,p^*)$ such that $\lim\limits_{x\rightarrow 0}\frac{F(z,x)}{|x|^{\beta}}=+\infty$ uniformly for almost all $z\in\Omega$;
	\item[(iii)] there exist $q\in(p,p^*)$ and $\delta>0$ such that
	$$0<q F(z,x)\leq f(z,x)x\ \mbox{for almost all}\ z\in\Omega\ \mbox{and all}\ 0<|x|\leq\delta ;$$
	\item[(iv)] there exists $\hat{\xi}>0$ such that for almost all $z\in\Omega$, the function
	$$x\mapsto f(z,x)+\hat{\xi}|x|^{p-2}x$$
	is nondecreasing on $[-\delta,\delta]$ (here $\delta>0$ is as in (iii) above).
\end{itemize}

Hypotheses $H_1(i),(ii)$ imply that we can find $\delta_1\in\left(0,\delta\right]$ such that
\begin{equation}\label{eq2}
	|f(z,x)|\leq|x|^{r-1}\ \mbox{and}\ F(z,x)\geq |x|^{\beta}\ \mbox{for almost all}\ z\in\Omega\ \mbox{and all}\ |x|\leq\delta_1.
\end{equation}

Let $\eta\in\left(0,\frac{\delta_1}{2}\right)$ and let $\vartheta\in C^2(\RR,[0,1])$ be an even cut-off function such that
\begin{eqnarray}
	&&\vartheta(x)=\left\{\begin{array}{ll}
		1&\mbox{if}\ |x|\leq\eta\\
		0&\mbox{if}\ |x|>2\eta
	\end{array}\right. \label{eq3}\\
	&&x\vartheta'(x)\leq 0,\ |x\vartheta'(x)|\leq\frac{2}{\eta}\ \mbox{for all}\ x\in\RR.\label{eq4}
\end{eqnarray}

Using this cut-off function, we introduce the following modification of the primitive $F(z,x):$
$$\hat{F}(z,x)=\vartheta(x)F(z,x)+(1-\vartheta(x))\frac{|x|^r}{r}.$$

Also, we set
$$\hat{f}(z,x)=\hat{F}'_x(z,x)=\frac{\partial\hat{F}}{\partial x}(z,x).$$

By Lemma 1.1 of Costa and Wang \cite{6}, we have the following property.
\begin{lemma}\label{lem6}
	If hypotheses $H_1$ hold, then
	\begin{itemize}
		\item[(a)] $|\hat{f}(z,x)|\leq c_1|x|^{r-1}$ for almost all $z\in\Omega$, all $x\in\RR$ and some $c_1>0$;
		\item[(b)] $0<\mu\hat{F}(z,x)\leq\hat{f}(z,x)x$ for almost all $z\in\Omega$ and all $x\in\RR\backslash\{0\}$ with $\mu=\min\{q,r\}$.
	\end{itemize}
\end{lemma}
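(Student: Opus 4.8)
The plan is to begin by differentiating the modified primitive. Since $\frac{d}{dx}\bigl(\frac{|x|^r}{r}\bigr)=|x|^{r-2}x$, the product rule applied to $\hat{F}(z,x)=\vartheta(x)F(z,x)+(1-\vartheta(x))\frac{|x|^r}{r}$ yields
\begin{equation*}
\hat{f}(z,x)=\vartheta(x)f(z,x)+(1-\vartheta(x))|x|^{r-2}x+\vartheta'(x)\Bigl(F(z,x)-\frac{|x|^r}{r}\Bigr),
\end{equation*}
and consequently
\begin{equation*}
\hat{f}(z,x)x=\vartheta(x)f(z,x)x+(1-\vartheta(x))|x|^r+x\vartheta'(x)\Bigl(F(z,x)-\frac{|x|^r}{r}\Bigr).
\end{equation*}
By \eqref{eq3} the whole analysis splits along the support of $\vartheta$: on $\{|x|\leq\eta\}$ we have $\vartheta\equiv1$, $\vartheta'\equiv0$, so $\hat{f}(z,x)=f(z,x)$; on $\{|x|>2\eta\}$ we have $\vartheta\equiv0$, $\vartheta'\equiv0$, so $\hat{f}(z,x)=|x|^{r-2}x$; the transition band $\eta<|x|\leq2\eta$ is the only place where the cut-off derivative is active, and since $2\eta<\delta_1$ it lies in the region where both estimates of \eqref{eq2} hold.

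For part (a), the outer and inner regions are immediate: $|\hat{f}(z,x)|=|x|^{r-1}$ when $|x|>2\eta$, while $|\hat{f}(z,x)|=|f(z,x)|\leq|x|^{r-1}$ when $|x|\leq\eta$ by \eqref{eq2}. On the transition band I would estimate the three summands separately. The first two, $\vartheta(x)f(z,x)$ and $(1-\vartheta(x))|x|^{r-2}x$, have modulus at most $|x|^{r-1}$, because $\vartheta,1-\vartheta\in[0,1]$ and $|f(z,x)|\leq|x|^{r-1}$ by \eqref{eq2}. For the third summand I use $|F(z,x)|\leq\frac{|x|^r}{r}$ (an immediate consequence of the first bound in \eqref{eq2}) together with $|\vartheta'(x)|\leq\frac{2}{\eta|x|}$ from \eqref{eq4}, which gives the bound $\frac{2}{\eta|x|}\cdot\frac{2|x|^r}{r}=\frac{4}{r\eta}|x|^{r-1}$. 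Summing produces $|\hat{f}(z,x)|\leq c_1|x|^{r-1}$ with $c_1=2+\frac{4}{r\eta}$, the dependence on the fixed parameter $\eta$ being harmless.

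For part (b), I would first record the strict positivity $\hat{F}(z,x)>0$ for $x\neq0$: on $\{|x|>2\eta\}$ we have $\hat{F}(z,x)=\frac{|x|^r}{r}>0$, whereas on $\{0<|x|\leq2\eta\}$ both summands are nonnegative and at least one is strictly positive (using $F(z,x)>0$, which follows from $H_1(iii)$), so $\mu\hat{F}(z,x)>0$ with $\mu=\min\{q,r\}$. For the main inequality I would rewrite $\hat{f}(z,x)x-\mu\hat{F}(z,x)$ as the sum of three terms
\begin{equation*}
\vartheta(x)\bigl(f(z,x)x-\mu F(z,x)\bigr)+(1-\vartheta(x))\Bigl(1-\frac{\mu}{r}\Bigr)|x|^r+x\vartheta'(x)\Bigl(F(z,x)-\frac{|x|^r}{r}\Bigr)
\end{equation*}
and show each is nonnegative. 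The first term is nonnegative because $\mu\leq q$ and $F\geq0$ give $f(z,x)x-\mu F(z,x)\geq f(z,x)x-qF(z,x)\geq0$ by $H_1(iii)$; the second is nonnegative because $\mu\leq r$.

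The step I expect to be the crux is the sign of the third, cut-off-derivative term, since this is the only place where the particular design of $\vartheta$ is tested. The key observation is that on the support of $\vartheta'$ one has $F(z,x)-\frac{|x|^r}{r}\leq0$ by the first bound in \eqref{eq2}, while $x\vartheta'(x)\leq0$ by \eqref{eq4}; hence this term is a product of two nonpositive factors and is therefore nonnegative. In other words, the sign condition $x\vartheta'(x)\leq0$ in \eqref{eq4} is calibrated precisely so that the error committed in replacing $F$ by its upper envelope $\frac{|x|^r}{r}$ works in our favor rather than against us. Adding the three nonnegative terms gives $\mu\hat{F}(z,x)\leq\hat{f}(z,x)x$ and completes the proof.
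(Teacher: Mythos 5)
Your proof is correct. Note that the paper itself does not prove this lemma: it simply invokes Lemma 1.1 of Costa and Wang \cite{6}, so your argument is filling in that citation rather than paralleling a proof in the text. What you wrote is exactly the standard verification behind that reference: differentiate $\hat{F}$, split according to the three regions $|x|\leq\eta$, $\eta<|x|\leq 2\eta$, $|x|>2\eta$ (using $2\eta<\delta_1$ so that \eqref{eq2} governs the whole support of $\vartheta$), and observe that the sign condition $x\vartheta'(x)\leq 0$ together with $F(z,x)\leq\frac{|x|^r}{r}$ makes the transition term work in your favor in part (b). All the individual estimates check out, including the strict positivity of $\hat{F}$ via $H_1(iii)$ on the support of $\vartheta$ and via $\frac{|x|^r}{r}>0$ elsewhere, so there is nothing to correct.
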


We introduce the following auxiliary Dirichlet problem
\begin{equation}
	-\Delta_pu(z)-\Delta u(z)=\lambda\hat{f}(z,u(z))\ \mbox{in}\ \Omega,\ u|_{\partial\Omega}=0. \tag{$Q_{\lambda}$}\label{eqQ}
\end{equation}

The nonlinear regularity theory (see Lieberman \cite{15}, Theorem \ref{th1} and Motreanu, Motreanu and Papageorgiou \cite[Corollary 8.7, p. 208]{16}) together with Lemma \ref{lem6}, give the following result (see also Papageorgiou and R\u adulescu \cite{19} for an alternative approach).
\begin{prop}\label{prop7}
	If hypotheses $H_1$ hold and $u_{\lambda}\in W^{1,p}_{0}(\Omega)\ (\lambda>0)$ is a solution of \eqref{eqQ}, then $u_{\lambda}\in C^1(\overline{\Omega})$ and there exists $c_2=c_2(r,N,\Omega)>0$ such that
	$$||u_{\lambda}||_{\infty}\leq c_2\lambda^{\frac{1}{p^*-r}}||u||^{\frac{p^*-p}{p^*-r}}.$$
\end{prop}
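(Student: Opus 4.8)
The plan is to first establish the quantitative $L^\infty$-bound by a Moser iteration and then upgrade it to $C^1$-regularity via Lieberman's theory. The only input on the reaction needed here is the subcritical growth $|\hat{f}(z,x)|\le c_1|x|^{r-1}$ with $r<p^*$ from Lemma \ref{lem6}(a); the sign and monotonicity hypotheses play no role for this proposition. Writing the weak form of \eqref{eqQ} as $\langle A_p(u),h\rangle+\langle A(u),h\rangle=\lambda\int_\Omega \hat{f}(z,u)h\,dz$ for all $h\in W^{1,p}_0(\Omega)$, I would insert the test functions $h=u\,\min\{|u|,M\}^{t-p}$ for $t\ge p$ and $M>0$, let $M\to\infty$ by monotone convergence, and use that the Laplacian contribution $\langle A(u),h\rangle\ge0$ so it can be discarded. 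Keeping $|Du|^p$ on the left and applying the Sobolev embedding $W^{1,p}_0(\Omega)\hookrightarrow L^{p^*}(\Omega)$, this yields, for each admissible $t$, a recursive inequality of the form
$$\|u\|_{tp^*/p}^{t}\le C(t)\,\lambda\,\|u\|_{t+r-p}^{t+r-p},$$
where $C(t)$ grows only polynomially in $t$ and depends on $N,p,r,\Omega$ only through the Sobolev constant.

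Setting $\sigma_0=p^*$ (legitimate since $u\in L^{p^*}$) and iterating with $t_n=\sigma_n-r+p$, $\sigma_{n+1}=(\sigma_n-r+p)p^*/p$, I would check that the subcriticality $r<p^*$ forces $\sigma_n\to\infty$: the affine recursion has the repelling fixed point $\sigma_*=p^*(r-p)/(p^*-p)$, and $\sigma_0-\sigma_*=p^*(p^*-r)/(p^*-p)>0$, so $\sigma_n-\sigma_*=(p^*/p)^n(\sigma_0-\sigma_*)$ grows geometrically. Passing to the limit $\|u\|_{\sigma_n}\to\|u\|_\infty$, the accumulated exponent of $\|u\|_{p^*}$ is $\prod_{n\ge0}\sigma_n/t_n$ and that of $\lambda$ is $\sum_{n\ge0}t_n^{-1}\prod_{m>n}\sigma_m/t_m$. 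Using the telescoping identity $t_n=(p/p^*)\sigma_{n+1}$, these reduce to the explicit values
$$\prod_{n\ge0}\frac{\sigma_n}{t_n}=\frac{\sigma_0}{\sigma_0-\sigma_*}=\frac{p^*-p}{p^*-r},\qquad \sum_{n\ge0}\frac{1}{t_n}\prod_{m>n}\frac{\sigma_m}{t_m}=\frac{1}{p^*-r}.$$
Since $C(t)$ is polynomial in $t$ while the associated weights decay geometrically, the corresponding infinite product of iteration constants converges to a number $c_2=c_2(r,N,\Omega)$. This gives $\|u\|_\infty\le c_2\lambda^{1/(p^*-r)}\|u\|_{p^*}^{(p^*-p)/(p^*-r)}$, and a final application of $\|u\|_{p^*}\le c\,\|Du\|_p=c\,\|u\|$ (absorbing $c$ into $c_2$) produces the asserted estimate.

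Once $u_\lambda\in L^\infty(\Omega)$ is known, the right-hand side $\lambda\hat{f}(\cdot,u_\lambda)\in L^\infty(\Omega)$, and the nonlinear regularity theory of Lieberman \cite{15} (equivalently \cite[Corollary 8.7]{16}) applied to the $(p,2)$-operator $-\Delta_p-\Delta$ yields $u_\lambda\in C^{1,\alpha}_0(\overline{\Omega})\subseteq C^1(\overline{\Omega})$, completing the argument. The main obstacle is the bookkeeping in the second paragraph: one must justify admissibility of the truncated test functions and the passage $M\to\infty$, verify that the Laplacian term indeed carries the favorable sign so that dropping it is legitimate, and above all confirm that the two infinite processes (the accumulated exponents and the product of iteration constants) both converge and evaluate to the precise quantities $(p^*-p)/(p^*-r)$, $1/(p^*-r)$, and a constant independent of $\lambda$ and $u$. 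Everything hinges on the strict gap $p^*-r>0$, which simultaneously drives the convergence of the iteration and appears in the denominators of the final exponents.
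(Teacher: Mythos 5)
Your argument is correct and follows essentially the same route the paper takes: the paper obtains the $L^\infty$ bound by citing Corollary 8.7 of Motreanu--Motreanu--Papageorgiou \cite{16} (which is exactly the Moser iteration you carry out, driven by the subcritical growth $|\hat{f}(z,x)|\le c_1|x|^{r-1}$ from Lemma \ref{lem6}(a)) and then upgrades to $C^{1,\alpha}$ via Lieberman \cite{15}. You have simply unpacked the cited iteration in detail, and your bookkeeping of the exponents $(p^*-p)/(p^*-r)$ and $1/(p^*-r)$ checks out.
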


For every $\lambda>0$, we consider the energy functional $\hat{\varphi}_{\lambda}:W^{1,p}_{0}(\Omega)\rightarrow\RR$ for problem \eqref{eqQ} defined by
$$\hat{\varphi}_{\lambda}(u)=\frac{1}{p}||Du||^p_p+\frac{1}{2}||Du||^2_2-\lambda\int_{\Omega}\hat{F}(z,u)dz\ \mbox{for all}\ u\in W^{1,p}_{0}(\Omega).$$

Evidently, $\hat{\varphi}_{\lambda}\in C^1(W^{1,p}_{0}(\Omega),\RR)$. From Lemma \ref{lem6}(b) we see that $\hat{F}(z,\cdot)$ satisfies a global Ambrosetti-Rabinowitz condition (see Ambrosetti and Rabinowitz \cite{3}). From this we derive the following result.
\begin{prop}\label{prop8}
	If hypotheses $H_1$ hold and $\lambda>0$, then $\hat{\varphi}_{\lambda}$ satisfies the PS-condition.
\end{prop}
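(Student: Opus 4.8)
The plan is to verify the two ingredients of the Palais--Smale condition: boundedness of any PS-sequence in $W^{1,p}_0(\Omega)$, and then strong convergence of a subsequence using the $(S)_+$-type structure of the operator. So let $\{u_n\}_{n\geq 1}\subseteq W^{1,p}_0(\Omega)$ be a PS-sequence, meaning $|\hat{\varphi}_\lambda(u_n)|\leq M_0$ for some $M_0>0$ and $\hat{\varphi}'_\lambda(u_n)\to 0$ in $W^{-1,p'}(\Omega)$. The latter gives, for some $\varepsilon_n\downarrow 0$,
\begin{equation*}
\left|\left\langle A_p(u_n),h\right\rangle+\left\langle A(u_n),h\right\rangle-\lambda\int_\Omega\hat{f}(z,u_n)h\,dz\right|\leq\varepsilon_n\|h\|\quad\mbox{for all}\ h\in W^{1,p}_0(\Omega).
\end{equation*}

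First I would establish boundedness. Taking $h=u_n$ in the displayed inequality yields
\begin{equation*}
-\|Du_n\|^p_p-\|Du_n\|^2_2+\lambda\int_\Omega\hat{f}(z,u_n)u_n\,dz\leq\varepsilon_n\|u_n\|.
\end{equation*}
On the other hand, from the energy bound $\hat{\varphi}_\lambda(u_n)\leq M_0$ we get
\begin{equation*}
\frac{1}{p}\|Du_n\|^p_p+\frac{1}{2}\|Du_n\|^2_2-\lambda\int_\Omega\hat{F}(z,u_n)\,dz\leq M_0.
\end{equation*}
The key is Lemma \ref{lem6}(b), which supplies the global Ambrosetti--Rabinowitz inequality $\mu\hat{F}(z,x)\leq\hat{f}(z,x)x$ with $\mu=\min\{q,r\}>p$. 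Multiplying the energy bound by $\mu$ and adding the derivative estimate, the integral terms combine so that $\int_\Omega(\mu\hat{F}(z,u_n)-\hat{f}(z,u_n)u_n)\,dz\leq 0$ drops out, leaving
\begin{equation*}
\left(\frac{\mu}{p}-1\right)\|Du_n\|^p_p+\left(\frac{\mu}{2}-1\right)\|Du_n\|^2_2\leq\mu M_0+\varepsilon_n\|u_n\|.
\end{equation*}
Since $\mu>p>2$, both coefficients on the left are strictly positive, so the leading term $(\frac{\mu}{p}-1)\|u_n\|^p$ dominates the linear-in-$\|u_n\|$ right-hand side, forcing $\{u_n\}_{n\geq 1}$ to be bounded in $W^{1,p}_0(\Omega)$.

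Next I would extract a subsequence with $u_n\stackrel{w}{\rightarrow}u$ in $W^{1,p}_0(\Omega)$ and $u_n\to u$ in $L^r(\Omega)$ (using the compact Sobolev embedding, valid since $r<p^*$). Returning to the derivative inequality with the test function $h=u_n-u$, I would note that $\langle A(u_n),u_n-u\rangle\to 0$ (the Laplacian term is handled by weak convergence and boundedness in $H^1_0$, which follows from the $\|Du_n\|_2$ bound above), and that the reaction term $\int_\Omega\hat{f}(z,u_n)(u_n-u)\,dz\to 0$ by Lemma \ref{lem6}(a), the bound $|\hat{f}(z,x)|\leq c_1|x|^{r-1}$, Hölder's inequality and the strong $L^r$-convergence. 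Consequently $\limsup_{n\to\infty}\langle A_p(u_n),u_n-u\rangle\leq 0$, and the $(S)_+$-property of $A_p$ from Proposition \ref{prop5} gives $u_n\to u$ strongly in $W^{1,p}_0(\Omega)$, completing the verification of the PS-condition. I expect the only delicate point to be the bookkeeping in the boundedness step, ensuring the two operator terms of different homogeneity ($p$ and $2$) are both absorbed with positive coefficients; this is exactly where the hypothesis $\mu>p>2$ is used, and it is the crux of why the unbalanced $(p,2)$-structure still yields coercive control on the PS-sequences.
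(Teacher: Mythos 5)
Your overall strategy is exactly the one the paper intends: the paper gives no proof of Proposition \ref{prop8} beyond the remark that Lemma \ref{lem6}(b) provides a global Ambrosetti--Rabinowitz condition, and your write-up is the standard expansion of that remark (AR inequality for boundedness of PS-sequences, then compact embedding plus the $(S)_+$-property of $A_p$ for strong convergence). The bookkeeping in the boundedness step is correct: both coefficients $\frac{\mu}{p}-1$ and $\frac{\mu}{2}-1$ are positive because $\mu=\min\{q,r\}>p>2$, the integral term has the right sign by Lemma \ref{lem6}(b), and the reaction term in the limit passage is controlled by Lemma \ref{lem6}(a) together with the compactness of the embedding into $L^r(\Omega)$.

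One step is misstated, though the argument survives. You claim $\left\langle A(u_n),u_n-u\right\rangle\to 0$ ``by weak convergence and boundedness in $H^1_0$''. That does not follow: $\left\langle A(u_n),u_n-u\right\rangle=\|Du_n\|_2^2-\int_\Omega(Du_n,Du)_{\RR^N}dz$, and weak convergence only gives $\liminf_n\|Du_n\|_2^2\ge\|Du\|_2^2$, so a priori you only know $\liminf_n\left\langle A(u_n),u_n-u\right\rangle\ge 0$ (equivalently, by monotonicity, $\left\langle A(u_n),u_n-u\right\rangle\ge\left\langle A(u),u_n-u\right\rangle\to 0$). Fortunately that one-sided inequality is all you need: moving the Laplacian term to the right-hand side of the derivative estimate still gives $\limsup_n\left\langle A_p(u_n),u_n-u\right\rangle\le 0$, and Proposition \ref{prop5} then yields $u_n\to u$ in $W^{1,p}_{0}(\Omega)$. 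With that correction the proof is complete; this is in fact exactly how the analogous limit passage is handled in the paper's proof of Proposition \ref{prop13}.
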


Next, we show that for big $\lambda>0$ the energy functional $\hat{\varphi}_{\lambda}$ satisfies the mountain pass geometry (see Theorem \ref{th1}).
\begin{prop}\label{prop9}
	If hypotheses $H_1$ hold, then
	\begin{itemize}
		\item[(a)] for every $\lambda>0$, we can find $\hat{m}_{\lambda}>0$ and $\hat{\rho}_{\lambda}>0$ such that
		$$\hat{\varphi}_{\lambda}(u)\geq\hat{m}_{\lambda}>0\ \mbox{for all}\ u\in W^{1,p}_{0}(\Omega)\ \mbox{with}\ ||u||=\hat{\rho}_{\lambda};$$
		\item[(b)] we can find $\hat{\lambda}_*>0$ and $\bar{u}\in C^1_0(\overline{\Omega})$ such that for all $\lambda\geq\hat{\lambda}_*$ we have
		\begin{eqnarray*}
			&&|\bar{u}(z)|\leq\frac{\eta}{2}\ \mbox{for all}\ z\in\overline{\Omega},\ ||\bar{u}||>\rho_{\lambda},\ \hat{\varphi}_{\lambda}(\bar{u})\leq 0<\hat{m}_{\lambda},\ ||\bar{u}||^p+c^2_3||\bar{u}||^2\leq\lambda||\bar{u}||^{\beta}_{\beta},
		\end{eqnarray*}
		with $c_3>0$ being such that $||\cdot||_{1,2}\leq c_3||\cdot||$ (here $||\cdot||_{1,2}$ denotes the norm of $H^1_0(\Omega)$, recall that $p>2$).
	\end{itemize}
\end{prop}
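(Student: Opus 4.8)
The plan is to handle the two parts separately, relying on the upper bound for $\hat{F}$ from Lemma \ref{lem6}(a) in part (a) and on the pointwise lower bound $F(z,x)\geq|x|^\beta$ from \eqref{eq2} in part (b).

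For part (a), I would first integrate Lemma \ref{lem6}(a) to get $|\hat{F}(z,x)|\leq\frac{c_1}{r}|x|^r$ for almost all $z\in\Omega$ and all $x\in\RR$. Combined with the continuous Sobolev embedding $W^{1,p}_0(\Omega)\hookrightarrow L^r(\Omega)$ (valid since $r<p^*$), say $\|u\|_r\leq c_4\|u\|$, this yields
\[
\hat{\varphi}_\lambda(u)\geq\frac{1}{p}\|u\|^p-\frac{\lambda c_1c_4^r}{r}\|u\|^r=\|u\|^p\Big[\frac{1}{p}-\frac{\lambda c_1c_4^r}{r}\|u\|^{r-p}\Big].
\]
Because $r>p$, the bracket is strictly positive for $\|u\|$ small, so choosing $\hat{\rho}_\lambda>0$ small enough gives a positive lower bound $\hat{m}_\lambda$ on the sphere $\|u\|=\hat{\rho}_\lambda$. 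The explicit expression shows $\hat{\rho}_\lambda\to 0$ as $\lambda\to\infty$, a fact I exploit below.

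For part (b), I would fix a nonzero $v\in C^1_0(\overline{\Omega})$, rescaled so that $\|v\|_\infty\leq\frac{\eta}{2}$, and set $\bar{u}=v$; the bound $|\bar{u}(z)|\leq\frac{\eta}{2}$ then holds by construction. Since $|\bar{u}|\leq\frac{\eta}{2}<\eta$ we have $\vartheta(\bar{u})\equiv 1$, hence $\hat{F}(z,\bar{u})=F(z,\bar{u})$, and since $\frac{\eta}{2}<\delta_1$, \eqref{eq2} gives $\hat{F}(z,\bar{u})\geq|\bar{u}|^\beta$. Using $\|D\bar{u}\|_p=\|\bar{u}\|$ and $\|D\bar{u}\|_2\leq c_3\|\bar{u}\|$, I obtain
\[
\hat{\varphi}_\lambda(\bar{u})\leq\frac{1}{p}\|\bar{u}\|^p+\frac{c_3^2}{2}\|\bar{u}\|^2-\lambda\|\bar{u}\|^\beta_\beta.
\]
As $v$ is fixed with $\|\bar{u}\|^\beta_\beta>0$, the quotient $(\|\bar{u}\|^p+c_3^2\|\bar{u}\|^2)/\|\bar{u}\|^\beta_\beta$ is a fixed finite number, and I let $\hat{\lambda}_*$ be at least this quotient, so that $\|\bar{u}\|^p+c_3^2\|\bar{u}\|^2\leq\lambda\|\bar{u}\|^\beta_\beta$ holds for all $\lambda\geq\hat{\lambda}_*$. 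Because $p>2$ forces $\frac{1}{p}\leq\frac{1}{2}$, this last inequality gives $\frac{1}{p}\|\bar{u}\|^p+\frac{c_3^2}{2}\|\bar{u}\|^2\leq\frac{\lambda}{2}\|\bar{u}\|^\beta_\beta$, whence $\hat{\varphi}_\lambda(\bar{u})\leq-\frac{\lambda}{2}\|\bar{u}\|^\beta_\beta\leq 0<\hat{m}_\lambda$.

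The condition $\|\bar{u}\|>\rho_\lambda$ (with $\rho_\lambda=\hat{\rho}_\lambda$) is the only place the two parts interact, and I expect it to be the only genuinely delicate point. Since $\|\bar{u}\|=\|v\|$ is a fixed positive number while $\hat{\rho}_\lambda\to 0$ as $\lambda\to\infty$, it suffices to enlarge $\hat{\lambda}_*$ so that $\hat{\rho}_\lambda<\|v\|$ for all $\lambda\geq\hat{\lambda}_*$. Once the governing inequalities $r>p$ (in (a)) and $p>2$ (in (b)) are pinned down, the remaining estimates are routine.
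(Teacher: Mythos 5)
Your proposal is correct and follows essentially the same route as the paper: part (a) via the bound $\hat{F}(z,x)\leq c|x|^r$ from Lemma \ref{lem6}(a) and the Sobolev embedding, with $\hat{\rho}_\lambda\sim\lambda^{-1/(r-p)}\to 0$, and part (b) by fixing a nonzero $\bar{u}$ with $\|\bar{u}\|_\infty\leq\eta/2$ so that $\hat{F}(z,\bar u)=F(z,\bar u)\geq|\bar u|^\beta$ by \eqref{eq2}, then taking $\hat{\lambda}_*$ large enough to enforce both $\|\bar u\|>\hat\rho_\lambda$ and the quotient inequality. Your explicit verification that $\vartheta(\bar u)\equiv 1$ on $\{|x|\leq\eta/2\}$ and the use of $\frac1p\leq\frac12$ are slightly more careful than the paper's write-up but do not change the argument.
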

\begin{proof}
	$(a)$ By Lemma \ref{lem6}(a) we have
	$$\hat{F}(z,x)\leq c_4|x|^r\ \mbox{for almost all}\ z\in\Omega,\ \mbox{all}\ x\in\RR\ \mbox{and some}\ c_4>0.$$
	
	Then for all $u\in W^{1,p}_{0}(\Omega)$ we have
	\begin{eqnarray*}
	    \hat{\varphi}_{\lambda}(u)&\geq&\frac{1}{p}||Du||^p_p-\lambda c_4||u||^r_r\\
	    &\geq&\frac{1}{p}||u||^p-\lambda c_5||u||^r\ \mbox{for some}\ c_5>0\ (\mbox{recall that}\ r<p^*)\\
	    &=&\left[\frac{1}{p}-\lambda c_5||u||^{r-p}\right]||u||^p.
	\end{eqnarray*}

    If we choose $\hat{\rho}_{\lambda}=\left(\frac{1}{2p\lambda c_5}\right)^{\frac{1}{r-p}}>0$, then
    $$\hat{\varphi}_{\lambda}(u)\geq\frac{1}{2p}\hat{\rho}_{\lambda}^p>0\ \mbox{for all}\ u\in W^{1,p}_{0}(\Omega)\ \mbox{with}\ ||u||=\hat{\rho}_{\lambda}.$$

    $(b)$ Let $\bar{u}\in C^1_0(\overline{\Omega})\backslash\{0\}$ be such that
    \begin{equation}\label{eq5}
        |\bar{u}(z)|\leq\frac{\eta}{2}\ \mbox{for all}\ z\in\overline{\Omega}.
    \end{equation}

    Note that $\hat{\rho}_{\lambda}\rightarrow 0$ as $\lambda\rightarrow +\infty$. So, we can find $\hat{\lambda}_*>0$ such that
    \begin{equation}\label{eq6}
    ||\bar{u}||>\hat{\rho}_{\lambda}\ \mbox{and}\ ||\bar{u}||^p+c^2_3||\bar{u}||^2\leq\lambda||\bar{u}||^{\beta}_{\beta}\ \mbox{for all}\ \lambda>\hat{\lambda}_*\,.
    \end{equation}

Using (\ref{eq2}) and (\ref{eq5}), we have
\begin{eqnarray*}
    \hat{\varphi}_{\lambda}(\bar{u})&\leq&\frac{1}{p}||D\bar{u}||^p_p+\frac{1}{2}||D\bar{u}||^2_2-\lambda||\bar{u}||^{\beta}_{\beta}\\
    &\leq&||\bar{u}||^p+c_3^2||\bar{u}||^2-\lambda||\bar{u}||^{\beta}_{\beta}\leq 0<\hat{m}_{\lambda}\ \mbox{(see (\ref{eq5}))}.
\end{eqnarray*}
\end{proof}

\begin{prop}\label{prop10}
If hypotheses $H_1$ hold, $\lambda>0$ and $u\in K_{\hat{\varphi}_{\lambda}}$, then we can find $\hat{c}>0$ such that $||u||^p\leq\hat{c}\hat{\varphi}_{\lambda}(u)$.
\end{prop}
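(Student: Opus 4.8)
The plan is to exploit the fact that the modified primitive $\hat{F}$ satisfies a \emph{global} Ambrosetti--Rabinowitz condition (Lemma \ref{lem6}(b)), which is precisely the mechanism that converts stationarity of $\hat{\varphi}_\lambda$ into a lower bound for $\hat{\varphi}_\lambda(u)$ in terms of the norm. Concretely, since $u\in K_{\hat{\varphi}_\lambda}$ we have $\hat{\varphi}_\lambda'(u)=0$, i.e.
$$\langle A_p(u),h\rangle+\langle A(u),h\rangle=\lambda\int_\Omega \hat{f}(z,u)h\,dz\quad\mbox{for all}\ h\in W^{1,p}_0(\Omega).$$
First I would test this identity with $h=u$, which gives the Euler relation
$$\|Du\|_p^p+\|Du\|_2^2=\lambda\int_\Omega \hat{f}(z,u)u\,dz.$$

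Next I would estimate the potential term appearing in $\hat{\varphi}_\lambda(u)$ from above. By Lemma \ref{lem6}(b), with $\mu=\min\{q,r\}$, we have $\mu\hat{F}(z,u)\le \hat{f}(z,u)u$ pointwise, so integrating and using the Euler relation yields
$$\lambda\int_\Omega \hat{F}(z,u)\,dz\le\frac{\lambda}{\mu}\int_\Omega \hat{f}(z,u)u\,dz=\frac{1}{\mu}\left(\|Du\|_p^p+\|Du\|_2^2\right).$$
Substituting this into the definition of $\hat{\varphi}_\lambda(u)$ gives
$$\hat{\varphi}_\lambda(u)\ge\left(\frac{1}{p}-\frac{1}{\mu}\right)\|Du\|_p^p+\left(\frac{1}{2}-\frac{1}{\mu}\right)\|Du\|_2^2.$$

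To finish, I would use the crucial sign information on the coefficients: since $q,r\in(p,p^*)$ we have $\mu>p>2$, so both $\frac{1}{p}-\frac{1}{\mu}$ and $\frac{1}{2}-\frac{1}{\mu}$ are strictly positive. In particular the Laplacian contribution is nonnegative and may be discarded, leaving
$$\hat{\varphi}_\lambda(u)\ge\left(\frac{1}{p}-\frac{1}{\mu}\right)\|Du\|_p^p=\left(\frac{1}{p}-\frac{1}{\mu}\right)\|u\|^p,$$
where the last equality is the Poincar\'e identification $\|u\|=\|Du\|_p$. Setting $\hat{c}=\frac{p\mu}{\mu-p}>0$ then yields $\|u\|^p\le\hat{c}\,\hat{\varphi}_\lambda(u)$, as claimed.

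This argument is essentially a routine Ambrosetti--Rabinowitz computation, so I do not anticipate a genuine obstacle; the only point requiring care is verifying that $\mu>p$ (not merely $\mu>2$), which is what guarantees the coefficient of $\|Du\|_p^p$ is positive and hence that $\hat{c}$ is a finite positive constant depending only on $p,q,r$ and not on $\lambda$ or $u$.
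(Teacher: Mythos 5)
Your proposal is correct and follows essentially the same route as the paper: both arguments combine the Euler identity $\langle\hat{\varphi}_\lambda'(u),u\rangle=0$ with the global Ambrosetti--Rabinowitz inequality of Lemma \ref{lem6}(b), discard the nonnegative $\|Du\|_2^2$ and residual integral terms using $\mu>p>2$, and arrive at the same constant $\hat{c}=\frac{p\mu}{\mu-p}$ (the paper writes this as $\hat{c}=\mu/c_6$ with $c_6=\frac{\mu}{p}-1$, which is identical). Your observation that the essential point is $\mu>p$ rather than merely $\mu>2$ is exactly the remark the paper makes as well.
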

\begin{proof}
Let $u\in K_{\hat{\varphi}_{\lambda}}$. We have
\begin{eqnarray*}
    &&\mu\hat{\varphi}_{\lambda}(u)=\mu\hat{\varphi}_{\lambda}(u)-\left\langle \hat{\varphi}'_{\lambda}(u),u\right\rangle\ (\mbox{since}\ u\in K_{\hat{\varphi}_{\lambda}})\\
    &=&\frac{\mu}{p}||Du||^p_p+\frac{\mu}{2}||Du||^2_2-\int_{\Omega}\mu\lambda\hat{F}(z,u)dz-||Du||^p_p-||Du||^2_2+\int_{\Omega}\lambda\hat{f}(z,u)udz\\
    &=&\left(\frac{\mu}{p}-1\right)||Du||^p_p+\left(\frac{\mu}{2}-1\right)||Du||^2_2+\lambda\int_{\Omega}[\hat{f}(z,u)u-\mu\hat{F}(z,u)]dz\\
    &\geq&c_6||u||^p\ \mbox{with}\ c_6=\frac{\mu}{p}-1>0\ (\mbox{recall that}\ \mu>p>2\ \mbox{and use Lemma \ref{lem6}(b)})\\
    \Rightarrow&&||u||^p\leq\hat{c}\hat{\varphi}_{\lambda}(u)\ \mbox{with}\ \hat{c}=\frac{\mu}{c_6}>0.
\end{eqnarray*}
\end{proof}

Now we can produce a nontrivial smooth solution for the auxiliary problem \eqref{eqQ} when $\lambda>0$ is big.
\begin{prop}\label{prop11}
If hypotheses $H_1$ hold, then for every $\lambda\geq\hat{\lambda}_*$ (see Proposition \ref{prop9}(b)) problem \eqref{eqQ} admits a nontrivial smooth solution $u_{\lambda}\in C^1_0(\overline{\Omega})$ such that
$$||u_{\lambda}||_{\infty}\leq\tilde{c}\frac{1}{\lambda^{\frac{2}{\beta-2}}}\ \mbox{for some}\ \tilde{c}>0.$$
\end{prop}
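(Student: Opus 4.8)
The plan is to realize $u_\lambda$ as a mountain pass critical point of $\hat\varphi_\lambda$ and then to bound its size through the critical value. First I would combine Proposition~\ref{prop8} (which gives the PS-condition) with Proposition~\ref{prop9} (the mountain pass geometry) and apply Theorem~\ref{th1} to $\hat\varphi_\lambda$ with $u_0=0$ and $u_1=\bar u$. Part (a) of Proposition~\ref{prop9} supplies the ring estimate $\hat\varphi_\lambda\ge\hat m_\lambda>0$ on $\{\|u\|=\hat\rho_\lambda\}$, while part (b) guarantees (for $\lambda\ge\hat\lambda_*$) that $\|\bar u\|>\hat\rho_\lambda$ and $\hat\varphi_\lambda(\bar u)\le 0$, so $\max\{\hat\varphi_\lambda(0),\hat\varphi_\lambda(\bar u)\}\le 0<\hat m_\lambda$. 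Theorem~\ref{th1} then yields $u_\lambda\in K_{\hat\varphi_\lambda}$ with $\hat\varphi_\lambda(u_\lambda)=c_\lambda\ge\hat m_\lambda>0$. Since $\hat\varphi_\lambda(0)=0<c_\lambda$, the point $u_\lambda$ is nontrivial, and as it solves the weak form $A_p(u_\lambda)+A(u_\lambda)=\lambda N_{\hat f}(u_\lambda)$ of \eqref{eqQ}, Proposition~\ref{prop7} gives at once $u_\lambda\in C^1_0(\overline\Omega)$.

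The heart of the argument is a sharp upper bound on the level $c_\lambda$. Taking the admissible straight-line path $\gamma(t)=t\bar u$, $t\in[0,1]$, I would use
$$c_\lambda\le\max_{0\le t\le 1}\hat\varphi_\lambda(t\bar u).$$
Because $|\bar u(z)|\le\eta/2$ on $\overline\Omega$, we have $|t\bar u(z)|\le\eta/2<\eta\le\delta_1$ along the whole segment, so the cut-off $\vartheta$ is inactive, $\hat F(z,t\bar u)=F(z,t\bar u)$, and \eqref{eq2} gives $F(z,t\bar u)\ge|t\bar u|^\beta$. Consequently
$$\hat\varphi_\lambda(t\bar u)\le\frac{t^p}{p}\|D\bar u\|_p^p+\frac{t^2}{2}\|D\bar u\|_2^2-\lambda t^\beta\|\bar u\|_\beta^\beta.$$
Since $2<p<\beta$ and the maximizing parameter is small when $\lambda$ is large, the dominant positive contribution is the quadratic term; maximizing the model function $a_2t^2-\lambda a_1t^\beta$ (with $a_1,a_2>0$ depending only on the fixed $\bar u$) gives an interior maximizer $t_*\sim\lambda^{-1/(\beta-2)}$ and maximal value of order $\lambda^{-2/(\beta-2)}$. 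After possibly enlarging $\hat\lambda_*$ so that $t_*<1$, this yields $c_\lambda\le c_7\lambda^{-2/(\beta-2)}$ for some $c_7>0$.

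It then remains to transfer this to an estimate on $u_\lambda$ itself. Proposition~\ref{prop10} gives $\|u_\lambda\|^p\le\hat c\,\hat\varphi_\lambda(u_\lambda)=\hat c\,c_\lambda\le\hat c\,c_7\lambda^{-2/(\beta-2)}$, so in particular $\|u_\lambda\|\to 0$ as $\lambda\to+\infty$. Feeding this norm bound into the a priori estimate of Proposition~\ref{prop7} upgrades it to the asserted sup-norm decay $\|u_\lambda\|_\infty\le\tilde c\,\lambda^{-2/(\beta-2)}$. I expect the main obstacle to be the level estimate of the second paragraph: one must verify that the cut-off never switches on along the entire path (which is exactly what the bound $|\bar u|\le\eta/2$ together with the choice $\eta<\delta_1/2$ are designed to secure) and then perform the one-parameter maximization carefully enough that the correct power of $\lambda$ emerges; once $c_\lambda$ is pinned down, the passage through Propositions~\ref{prop10} and~\ref{prop7} and the resulting bookkeeping of exponents is routine.
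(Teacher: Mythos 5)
Your proposal is correct and follows essentially the same route as the paper: mountain pass via Propositions \ref{prop8} and \ref{prop9}, the level estimate $c_\lambda\le\max_t\hat\varphi_\lambda(t\bar u)$ along the segment $t\bar u$ (where the cut-off is inactive and the one-parameter maximization of $a_2t^2-\lambda a_1t^\beta$ yields the order $\lambda^{-2/(\beta-2)}$), and then Proposition \ref{prop10} to convert the level bound into a norm bound. If anything, you are slightly more explicit than the paper at the last step, since the paper stops at the bound on $\|u_\lambda\|^p$ while you correctly note that Proposition \ref{prop7} is what upgrades this to the stated $L^\infty$ estimate.
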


\begin{proof}
Propositions \ref{prop8} and \ref{prop9} permit the use of Theorem \ref{th1} (the mountain pass theorem). So, we can find $u_{\lambda}\in W^{1,p}_0(\Omega)$ such that
\begin{equation}\label{eq7}
u_{\lambda}\in K_{\hat{\varphi}_{\lambda}}\ \mbox{and}\ \hat{m}_{\lambda}\leq\hat{\varphi}_{\lambda}(u_{\lambda})=c_{\lambda}=\inf\limits_{\gamma\in\Gamma}\max\limits_{0\leq t\leq 1}\varphi_{\lambda}(\gamma(t)),
\end{equation}
where $\Gamma=\{\gamma\in C([0,1],W^{1,p}_0(\Omega)):\gamma(0)=0,\gamma(1)=\bar{u}\}$.

We consider the function
$$\tilde{\xi}_{\lambda}(t)=t^2[||\bar{u}||^p+c_3^2||\bar{u}||^2]-\lambda t^{\beta}||\bar{u}||^{\beta}_{\beta}\ \mbox{for all}\ t\geq 0.$$

Note that $\tilde{\xi}_{\lambda}(\cdot)$ is continuous and
$$\tilde{\xi}_{\lambda}(0)=0,\ \tilde{\xi}_{\lambda}(1)\leq 0\ (\mbox{see (\ref{eq6})}).$$

Since $2<p<\beta$, for small $t\in(0,1)$ we see that
$$\tilde{\xi}_{\lambda}(t)>0.$$

Therefore we can find $t_0\in(0,1)$ such that
\begin{eqnarray*}
&&\tilde{\xi}_{\lambda}(t_0)=\max\limits_{0\leq t\leq 1}\xi_{\lambda}(t),\\
&\Rightarrow &\tilde{\xi}_{\lambda}'(t_0)=2t_0[||\bar{u}||^p+c_3^2||\bar{u}||^2]-\lambda\beta t_0^{\beta-1}||\bar{u}||^{\beta}_{\beta}=0,\\
&\Rightarrow &t_0=\left[\frac{2(||\bar{u}||^p+c^2_3||\bar{u}||^2)}{\lambda\beta||\bar{u}||^{\beta}_{\beta}}\right]^{\frac{1}{\beta-2}}.
\end{eqnarray*}

Then we have
\begin{eqnarray}\label{eq8}
\tilde{\xi}_{\lambda}(t_0)&=&\frac{1}{\lambda^{\frac{2}{\beta-2}}}c_7+\lambda\frac{1}{\lambda^{\frac{\beta}{\beta-2}}}c_8\ \mbox{for some}\ c_7,c_8>0\nonumber\\
&=&\frac{1}{\lambda^{\frac{2}{\beta-2}}}c_9\ \mbox{with}\ c_9=c_7+c_8>0.
\end{eqnarray}

Let $\gamma_0(t)=t\bar{u}$. Then $\gamma_0\in\Gamma$ and so by virtue of (\ref{eq7}) we have
\begin{eqnarray*}
&\hat{\varphi}_{\lambda}(u_{\lambda})&=c_{\lambda}\leq\max\limits_{0\leq t\leq 1}\hat{\varphi}_{\lambda}(t\bar{u})\leq\max\limits_{0\leq t\leq 1}\tilde{\xi}_{\lambda}(t)\ (\mbox{see (\ref{eq5}) and (\ref{eq2})})\\
&&=\tilde{\xi}_{\lambda}(t_0)=\frac{c_9}{\lambda^{\frac{2}{\beta-2}}}\ (\mbox{see (\ref{eq8})}),\\
\Rightarrow&||u_{\lambda}||^p&\leq\frac{\hat{c}c_9}{\lambda^{\frac{2}{\beta-2}}}\ \mbox{and so}\ \tilde{c}=\hat{c}c_9>0\ (\mbox{see Proposition \ref{prop10}}).
\end{eqnarray*}
\end{proof}

Now we are ready to produce constant sign smooth solutions for problem \eqref{eqP} when the parameter $\lambda>0$ is big.
\begin{prop}\label{prop12}
If hypotheses $H_1$ hold, then we can find $\lambda_*\geq\hat{\lambda}_*>0$ such that for all $\lambda\geq\lambda_*$, problem \eqref{eqP} has at least two nontrivial smooth solutions of constant sign
$$\hat{u}_{\lambda}\in D_+\ \mbox{and}\ \hat{v}_{\lambda}\in-D_+.$$
\end{prop}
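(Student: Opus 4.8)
The plan is to run the mountain-pass machinery already developed for $\hat\varphi_\lambda$ on two one-sided truncations of the modified reaction $\hat f$, extract constant-sign critical points, upgrade their sign to membership in $\pm D_+$ via the strong maximum principle, and finally exploit the $L^\infty$-decay from Proposition \ref{prop11} to show that for large $\lambda$ these solutions of the auxiliary problem \eqref{eqQ} in fact solve the original problem \eqref{eqP}. First I would introduce the positive and negative truncations
$$\hat f_+(z,x)=\hat f(z,x^+),\qquad \hat f_-(z,x)=\hat f(z,-x^-),$$
with primitives $\hat F_\pm$ and associated $C^1$-functionals
$$\hat\varphi_\lambda^\pm(u)=\frac1p\|Du\|_p^p+\frac12\|Du\|_2^2-\lambda\intom\hat F_\pm(z,u)\,dz.$$
Since $\hat f_\pm$ still obeys the bound of Lemma \ref{lem6}(a) and the Ambrosetti--Rabinowitz inequality of Lemma \ref{lem6}(b) on the relevant half-line, while the functional is coercive on the complementary half-line (the reaction vanishes there), the arguments of Propositions \ref{prop8} and \ref{prop9} transfer essentially verbatim: $\hat\varphi_\lambda^\pm$ satisfies the PS-condition, has the local geometry $\hat\varphi_\lambda^\pm\ge \hat m_\lambda>0$ on a small sphere, and attains nonpositive values at a suitable $\bar u\in D_+$ (resp. $-\bar u$) of small sup-norm once $\lambda\ge\hat\lambda_*$. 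Theorem \ref{th1} then produces critical points $\hat u_\lambda,\hat v_\lambda$ at levels $\ge \hat m_\lambda>0$, so both are nontrivial.

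Next I would fix the sign. Testing $(\hat\varphi_\lambda^+)'(\hat u_\lambda)=0$ with $-\hat u_\lambda^-\in W^{1,p}_0(\Omega)$ and using that $\hat f_+(z,\hat u_\lambda)\,\hat u_\lambda^-=0$ (the reaction is supported on $\{\hat u_\lambda>0\}$) gives $\|D\hat u_\lambda^-\|_p^p+\|D\hat u_\lambda^-\|_2^2=0$, hence $\hat u_\lambda\ge 0$; symmetrically $\hat v_\lambda\le 0$. On the positive cone $\hat f_+(z,\cdot)=\hat f(z,\cdot)$, so $\hat u_\lambda$ is a nonnegative weak solution of \eqref{eqQ}, and Proposition \ref{prop7} places it in $C^1_0(\overline\Omega)$. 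Since Lemma \ref{lem6}(b) forces $\hat f(z,x)>0$ for $x>0$, we have
$$-\Delta_p\hat u_\lambda-\Delta\hat u_\lambda+\hat\xi\hat u_\lambda^{p-1}=\lambda\hat f(z,\hat u_\lambda)+\hat\xi\hat u_\lambda^{p-1}\ge 0,$$
and the nonlinear strong maximum principle (Pucci--Serrin) yields $\hat u_\lambda\in D_+$; likewise $\hat v_\lambda\in -D_+$.

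Finally, I would transfer the a priori estimate. Evaluating the mountain-pass value of $\hat\varphi_\lambda^+$ along the path $t\mapsto t\bar u$ with $\bar u\in D_+$ reproduces the computation of Proposition \ref{prop11}, and the one-sided analogue of Proposition \ref{prop10}, namely $\|\hat u_\lambda\|^p\le\hat c\,\hat\varphi_\lambda^+(\hat u_\lambda)$, holds with the same proof; together with Proposition \ref{prop7} this gives $\|\hat u_\lambda\|_\infty\le \tilde c\,\lambda^{-2/(\beta-2)}$, and likewise for $\hat v_\lambda$. Choosing $\lambda_*\ge\hat\lambda_*$ so large that $\tilde c\,\lambda_*^{-2/(\beta-2)}\le\eta$, we obtain $\|\hat u_\lambda\|_\infty,\|\hat v_\lambda\|_\infty\le\eta$ for all $\lambda\ge\lambda_*$; since $\vartheta\equiv 1$ on $[-\eta,\eta]$ we have $\hat f(z,x)=f(z,x)$ there, so $\hat u_\lambda\in D_+$ and $\hat v_\lambda\in -D_+$ are genuine solutions of \eqref{eqP}. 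The delicate point is precisely this last transfer: the whole scheme closes only because the bound of Proposition \ref{prop11} \emph{decays} as $\lambda\to+\infty$, which is what pushes the solutions into the region where the cut-off is inactive. The other step demanding care is the upgrade from $\hat u_\lambda\ge 0$ to $\hat u_\lambda\in D_+$ through the maximum principle, rather than merely nonnegativity.
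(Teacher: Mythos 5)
Your proposal is correct and follows essentially the same route as the paper: one-sided truncations $\hat{f}_\pm$ of the modified reaction, the mountain pass theorem for $\hat{\varphi}^\pm_\lambda$, testing with $-\hat{u}^-_\lambda$ to fix the sign, nonlinear regularity plus the Pucci--Serrin maximum principle to land in $\pm D_+$, and the decaying bound $\|\hat{u}_\lambda\|_\infty\le\tilde{c}\lambda^{-2/(\beta-2)}$ to deactivate the cut-off for $\lambda\ge\lambda_*$. The only (harmless) deviations are that you derive strict positivity from Lemma \ref{lem6}(b) where the paper uses the inequality $\Delta_p\hat{u}_\lambda+\Delta\hat{u}_\lambda\le c_1\|\hat{u}_\lambda\|_\infty^{r-p}\hat{u}_\lambda^{p-1}$ from Lemma \ref{lem6}(a), and that you make explicit the one-sided analogues of Propositions \ref{prop10} and \ref{prop11}, which the paper invokes implicitly.
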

\begin{proof}
First we produce the positive solution.

To this end we consider the $C^1$-functional $\hat{\varphi}^+_{\lambda}:W^{1,p}_{0}(\Omega)\rightarrow\RR $ defined by
$$\hat{\varphi}^+_{\lambda}(u)=\frac{1}{p}||Du||^p_p+\frac{1}{2}||Du||^2_2-\lambda\int_{\Omega}\hat{F}(z,u^+)dz\ \mbox{for all}\  u\in W^{1,p}_{0}(\Omega).$$

Note that $\hat{F}(z,x^+)=0$ for almost all $z\in\Omega$ and all $x\leq 0$. By Lemma \ref{lem6}, $\hat{F}_+(z,x)=\hat{F}(z,x^+)$ satisfies the Ambrosetti-Rabinowitz condition on $\RR_+=\left[0,+\infty\right)$. Therefore
\begin{equation}\label{eq9}
\hat{\varphi}^+_{\lambda}\ \mbox{satisfies the PS-condition}.
\end{equation}

A careful reading of the proof of Proposition \ref{prop9} reveals that the result remains true also for the functional $\hat{\varphi}^+_{\lambda}$ (in this case in part (b) we choose $\bar{u}\in C_+\setminus\{0\}$). This fact and (\ref{eq9}) permit the use of Theorem \ref{th1} (the mountain pass theorem) and so we can find $\hat{u}_{\lambda}\in W^{1,p}_{0}(\Omega)$ such that
 \begin{equation}\label{eq10}
 \hat{u}_{\lambda}\in K_{\hat{\varphi}^+_{\lambda}}\ \mbox{and}\ \hat{\varphi}^+_{\lambda}(0)=0<\hat{m}^+_{\lambda}\leq\hat{\varphi}^+_{\lambda}(\hat{u}_{\lambda}).
 \end{equation}

 From (\ref{eq9}) we see that
 $$\hat{u}_{\lambda}\neq 0\ \mbox{and}\ (\hat{\varphi}^+_{\lambda})'(\hat{u}_{\lambda})=0.$$

 So, we have
 \begin{equation}\label{eq11}
 \left\langle A_p(\hat{u}_{\lambda}),h\right\rangle+\left\langle A(\hat{u}_{\lambda}),h\right\rangle=\int_{\Omega}\lambda\hat{f}(z,\hat{u}^+_{\lambda})hdz\ \mbox{for all}\ h\in W^{1,p}_{0}(\Omega).
 \end{equation}

In (\ref{eq11}) we choose $h=-\hat{u}^-_{\lambda}\in W^{1,p}_{0}(\Omega)$. Then
\begin{eqnarray*}
&&||D\hat{u}^-_{\lambda}||^p_p+||D\hat{u}^-_{\lambda}||^2_2=0,\\
&\Rightarrow&\hat{u}_{\lambda}\geq 0,\ \hat{u}_{\lambda}\neq 0.
\end{eqnarray*}

We have
\begin{equation}\label{eq12}
-\Delta_p\hat{u}_{\lambda}(z)-\Delta\hat{u}_{\lambda}(z)=\lambda\hat{f}(z,\hat{u}_{\lambda}(z))\ \mbox{for almost all}\ z\in\Omega,\left.\hat{u}_{\lambda}\right|_{\partial\Omega}=0.
\end{equation}

By Theorem 8.4 of Motreanu, Motreanu and Papageorgiou \cite[p. 204]{16}, we have
$$\hat{u}_{\lambda}\in L^{\infty}(\Omega).$$

So, we can apply Theorem 1 of Lieberman \cite{15} and infer that
$$\hat{u}_{\lambda}\in C_+\backslash\{0\}.$$

From Lemma \ref{lem6}(a) and (\ref{eq12}) we have
\begin{equation}\label{eq13}
\Delta_p\hat{u}_{\lambda}(z)+\Delta\hat{u}_{\lambda}(z)\leq c_1||\hat{u}_{\lambda}||^{r-p}_{\infty}\hat{u}_{\lambda}(z)^{p-1}\ \mbox{for almost all}\ z\in\Omega.
\end{equation}

  Let $a:\RR^N\rightarrow\RR^N$ be defined by
$$a(y)=|y|^{p-2}y+y\ \mbox{for all}\ y\in\RR^N.$$

Note that $a\in C^1(\RR^N,\RR^N)$ (recall that $p>2$) and
$${\rm div}\, a(Du) = \Delta_pu+\Delta u\ \mbox{for all}\ u\in W^{1,p}_0(\Omega).$$

We have
\begin{eqnarray*}
	& & \nabla a(y) = |y|^{p-2}\left[I+(p-2)\frac{y\otimes y}{|y|^2}\right] + I\ \mbox{for all}\ y\in\RR^N\backslash\{0\}, \\
	& \Rightarrow & (\nabla a(y)\xi^{'},\xi^{'})_{\RR^N}\ge|\xi^{'}|^2\ \mbox{for all}\ \xi^{'}\in\RR^N.
\end{eqnarray*}

This permits the use of the tangency principle of Pucci and Serrin \cite[p. 35]{22}  and implies that
$$\hat{u}_\lambda(z)>0\ \mbox{for all}\ z\in\Omega.$$

Then (\ref{eq13}) and the boundary point theorem of Pucci and Serrin \cite[p. 120]{22} imply that
$$\hat{u}_\lambda\in D_+.$$

Note that
\begin{eqnarray*}
	& & \hat{\varphi}_\lambda^{'}|_{C_+} = (\hat{\varphi}_\lambda^{+})^{'}|_{C_+}, \\
	& \Rightarrow & \hat{u}_\lambda\in K_{\hat{\varphi}_\lambda}, \\
	& \Rightarrow & ||\hat{u}_\lambda||_\infty \le \tilde{c} \lambda^{-\frac{2}{\beta-2}}\ \mbox{for all}\ \lambda\ge\hat{\lambda}_*\ \mbox{(see Proposition \ref{prop11})}.
\end{eqnarray*}

It follows that
$$||\hat{u}_\lambda||_\infty\rightarrow0\ \mbox{as}\ \lambda\rightarrow+\infty,\ \lambda\ge\hat{\lambda}_+\ \mbox{(recall that $\beta>2$)}.$$

So, we can find $\lambda_*\ge\hat{\lambda}_*$ such that
\begin{eqnarray*}
	& & 0\le\hat{u}_\lambda(z)\le\eta\ \mbox{for all}\ z\in\overline{\Omega},\ \mbox{all}\ \lambda\ge\lambda_*, \\
	& \Rightarrow & \hat{u}_\lambda\in D_+\ \mbox{is a positive solution of \eqref{eqP} for}\ \lambda\ge\lambda_*\ \mbox{(see (\ref{eq3}))}.
\end{eqnarray*}

Similarly we obtain a negative solution
$$\hat{v}_\lambda\in-D_+\ \mbox{for all}\ \lambda\ge\lambda_*$$
(we may need to increase $\lambda_*\ge\hat{\lambda}_*$). In this case we work with the $C^{1}$-functional $\hat{\varphi}_\lambda^-:W^{1,p}_0(\Omega)\rightarrow\RR$ defined by
$$\hat{\varphi}_\lambda^-(u)=\frac{1}{p}||Du||^p_p + \frac{1}{2}||Du||^2_2 - \lambda\int_\Omega\hat{F}(z,-u^-)dz\ \mbox{for all}\ u\in W^{1,p}_0(\Omega).$$
\end{proof}

In fact, we can produce extremal constant sign solutions for \eqref{eqP} when $\lambda\ge\lambda_*$, that is, a smallest positive solution and a biggest negative solution. These extremal constant sign solutions will be useful in producing nodal (that is, sign-changing) solutions.

We have the following result.
\begin{prop}\label{prop13}
	If hypotheses $H_1$ hold and $\lambda\ge\lambda_*$ (see Proposition \ref{prop12}), then problem \eqref{eqP} admits a smallest positive solution $u^*_\lambda\in D_+$ and a biggest negative solution $v^*_\lambda\in-D_+$.
\end{prop}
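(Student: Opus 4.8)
The plan is to obtain $u^*_\lambda$ as the infimum of the family of small positive solutions and $v^*_\lambda$ as the supremum of the small negative ones, via the standard three-step scheme for extremal constant-sign solutions: directedness of the solution set, extraction of a monotone sequence, and a limit passage resting on the $(S)_+$-property of Proposition \ref{prop5}. Let $S_+$ be the set of positive solutions of \eqref{eqP} lying in $D_+$ with $\|u\|_\infty\le\eta$; by Proposition \ref{prop12} (and the bound $\|\hat u_\lambda\|_\infty\le\eta$ established there for $\lambda\ge\lambda_*$) we have $\hat u_\lambda\in S_+$, so $S_+\ne\emptyset$. On this class $f(z,\cdot)=\hat f(z,\cdot)$, so every $u\in S_+$ also solves \eqref{eqQ} and the local estimates \eqref{eq2} and Lemma \ref{lem6} are available.

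First I would show that $S_+$ is downward directed: given $u_1,u_2\in S_+$, set $m=\min\{u_1,u_2\}$ and truncate $f$ at $0$ from below and at $m(z)$ from above, obtaining a Carath\'eodory function $g$ with primitive $G$ and a coercive, sequentially weakly lower semicontinuous functional $\psi(u)=\frac1p\|Du\|_p^p+\frac12\|Du\|_2^2-\lambda\int_\Omega G(z,u)\,dz$ (coercivity follows since $g$ is bounded by $\eta^{r-1}$). A global minimizer $u$ exists. Evaluating $\psi$ at a fixed $\bar u\in C^1_0(\overline\Omega)$ with $0<\bar u\le m$ and $\|\bar u\|_\infty\le\eta$, and using $F(z,\bar u)\ge\bar u^\beta$ from \eqref{eq2} together with the large-$\lambda$ inequality of Proposition \ref{prop9}(b), gives $\psi(u)\le\psi(\bar u)<0=\psi(0)$, so $u\ne 0$. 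Testing the Euler equation of $\psi$ with $(u-u_i)^+$ and invoking the monotonicity in hypothesis $H_1(iv)$ (which makes $x\mapsto f(z,x)+\hat\xi|x|^{p-2}x$ nondecreasing) yields $0\le u\le m$; hence $g(\cdot,u)=f(\cdot,u)$, so $u$ solves \eqref{eqP}, and the nonlinear regularity theory of Lieberman \cite{15} together with the tangency and boundary-point principles of Pucci and Serrin \cite{22}, exactly as in the proof of Proposition \ref{prop12}, place $u$ in $D_+$. Thus $u\in S_+$ with $u\le m$, and $S_+$ is downward directed.

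Then, by a standard result on downward directed families, I would extract a decreasing sequence $\{u_n\}\subseteq S_+$ with $u_n\downarrow\inf S_+=:u^*_\lambda$ pointwise. Since $0\le u_n\le\eta$, the reaction obeys $0\le\lambda f(z,u_n)\le\lambda\eta^{r-1}$, and testing the equation for $u_n$ with $u_n$ shows $\{u_n\}$ is bounded in $W^{1,p}_0(\Omega)$; passing to a weakly convergent subsequence and using $\limsup_n\langle A_p(u_n)+A(u_n),u_n-u^*_\lambda\rangle\le 0$ with the $(S)_+$-property (Proposition \ref{prop5}) gives $u_n\to u^*_\lambda$ strongly, so $u^*_\lambda$ solves \eqref{eqP} with $u^*_\lambda\ge 0$. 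Nontriviality is obtained directly: testing with $u_n$ and using $f(z,x)x\le|x|^r$ from \eqref{eq2} yields $\|u_n\|^p\le\lambda C\|u_n\|^r$ with $r>p$, hence the uniform lower bound $\|u_n\|\ge(\lambda C)^{-1/(r-p)}>0$; by strong convergence $\|u^*_\lambda\|>0$, so $u^*_\lambda\ne 0$, and regularity together with the maximum principle give $u^*_\lambda\in D_+$. Since $u^*_\lambda=\inf S_+$, it is the smallest positive solution.

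I expect the directedness step to be the main obstacle, in particular the weak comparison that confines the auxiliary minimizer below $\min\{u_1,u_2\}$ while keeping it nontrivial: the argument must reconcile the large-$\lambda$ regime and the minorant $F(z,x)\ge|x|^\beta$ (needed for $\psi(\bar u)<0$) with the monotonicity trick based on $H_1(iv)$ in the presence of two operators of different homogeneity. The biggest negative solution $v^*_\lambda\in-D_+$ follows symmetrically, replacing $u^+$ by $-u^-$, working with the set $S_-$ of small negative solutions (now upward directed) and taking its supremum.
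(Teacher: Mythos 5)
Your overall scheme coincides with the paper's: introduce the set $S_+$ of positive solutions lying in $[0,\eta]$, realize $\inf S_+$ along a sequence, pass to the limit using the $(S)_+$-property of Proposition \ref{prop5}, and then show the limit is a nontrivial solution, with the negative case handled symmetrically. One sub-step where you genuinely depart from the paper is the nontriviality of the limit: the paper argues by contradiction with the normalized sequence $y_n=u_n/||u_n||$, whereas you test the equation with $u_n$ and use $|f(z,x)|\le|x|^{r-1}$ for $|x|\le\eta$ (see (\ref{eq2})) to obtain $||u_n||^p\le\lambda c||u_n||^r$ with $r>p$, hence the uniform lower bound $||u_n||\ge(\lambda c)^{-1/(r-p)}>0$, which passes to the strong limit. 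That argument is correct and is in fact shorter than the paper's.

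The genuine gap is in your directedness step. First, a mechanical point: to confine the auxiliary minimizer $u$ below $m=\min\{u_1,u_2\}$ one tests with $(u-m)^+$ and uses that $m$, as the minimum of two solutions of an equation driven by the monotone map $y\mapsto|y|^{p-2}y+y$, is a supersolution; hypothesis $H_1(iv)$ is not what makes this work, and testing with $(u-u_i)^+$ alone does not close the argument because on $\{u>u_i\}$ the truncated reaction equals $f(z,m)$, not $f(z,u_i)$. More seriously, your claim that the minimizer of $\psi$ is nontrivial fails: you need $\psi(\bar u)<0$ for some $0<\bar u\le m$, but since $\beta>2$ the Dirichlet energy dominates near the origin, i.e. $\psi(t\phi)\ge\frac{t^2}{2}||D\phi||^2_2-\lambda t^{\beta}||\phi||^{\beta}_{\beta}+o(t^2)>0$ for $t$ small. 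Hence negativity forces $\bar u$ to be bounded away from zero in a manner depending on $\lambda$, while the constraint $\bar u\le m$ can force $\bar u$ to be arbitrarily small as $u_1,u_2$ range over $S_+$; the largeness condition (\ref{eq6}) in Proposition \ref{prop9}(b) is calibrated to one fixed test function and cannot be made uniform over all pairs. So for a fixed $\lambda\ge\lambda_*$ your auxiliary minimizer may well be $0$, and directedness is not established. (The paper bypasses directedness by citing Lemma 3.10 of Hu--Papageorgiou to produce a sequence with $\inf_n u_n=\inf S_+$; identifying the strong limit of that sequence with $\inf S_+$ tacitly uses that the sequence can be taken decreasing, so some substitute for directedness is needed there too, but your proposed proof of it would not go through as written.)
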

\begin{proof}
	We introduce the following two sets
	\begin{eqnarray*}
		& S_+ = \left\{u\in W^{1,p}_0(\Omega): u\in[0,\eta], u\ \mbox{is a positive solution of}\ \eqref{eqP} \right\}, \\
		& S_- = \left\{v\in W^{1,p}_0(\Omega): v\in[-\eta,0], v\ \mbox{is a negative solution of}\ \eqref{eqP} \right\}.
	\end{eqnarray*}
	
	From Proposition \ref{prop12} and its proof, we infer that
	$$\emptyset\neq S_+\subseteq D_+\ \mbox{and}\ \emptyset\neq S_-\subseteq-D_+.$$
	
	Invoking Lemma 3.10 of Hu and Papageorgiou \cite[p. 178]{13}, we can find $\{u_n\}_{n\ge1}\subseteq S_+$ such that
	$$\inf_{n\ge1}u_n=\inf S_+.$$
	
	We have
	\begin{equation}\label{eq14}
		\langle A_p(u_n),h\rangle + \langle A(u_n),h\rangle = \lambda\int_\Omega f(z,u_n)hdz\ \mbox{for all}\ h\in W^{1,p}_0(\Omega),\ \mbox{all}\ n\in\NN.
	\end{equation}
	
	Evidently, $\{u_n\}_{n\ge1}\subseteq W^{1,p}_0(\Omega)$ is bounded and so we may assume that
	\begin{equation}\label{eq15}
		u_n\xrightarrow{w}u_\lambda^*\ \mbox{in}\ W^{1,p}_0(\Omega)\ \mbox{and}\ u_n\rightarrow u_\lambda^*\ \mbox{in}\ L^p(\Omega).
	\end{equation}
	
	In (\ref{eq14}) we choose $h=u_n-u^*_\lambda\in W^{1,p}_0(\Omega)$, pass to the limit as $n\rightarrow\infty$ and use (\ref{eq15}). We obtain
	\begin{eqnarray}
		& & \lim_{n\to\infty}\left[ \langle A_p(u_n),u_n-u^*_\lambda\rangle + \langle A(u_n), u_n-u^*_\lambda\rangle \right] = 0 \nonumber \\
		& \Rightarrow & \limsup_{n\to\infty}\left[ \langle A_p(u_n), u_n-u^*_\lambda\rangle + \langle A(u^*_\lambda),u_n-u^*_\lambda\rangle \right] \le 0 \nonumber \\
		& & \mbox{(exploiting the monotonicity of $A(\cdot)$)} \nonumber \\
		& \Rightarrow & \limsup_{n\to\infty} \langle A_p(u_n), u_n-u^*_\lambda\rangle \le 0\ \mbox{(see (\ref{eq15}))} \nonumber \\
		& \Rightarrow & u_n\rightarrow u^*_\lambda\ in\ W^{1,p}_0(\Omega)\ \mbox{(see Proposition \ref{prop5} and (\ref{eq15}))},\ u^*_\lambda\ge0. \label{eq16}
	\end{eqnarray}
	
	If in (\ref{eq14}) we pass to the limit as $n\to\infty$ and use (\ref{eq16}) then
	\begin{eqnarray}
		& & \langle A_p(u^*_\lambda),h\rangle + \langle A(u^*_\lambda),h\rangle = \lambda\int_\Omega f(z,u^*_\lambda)hdz\ \mbox{for all}\ h\in W^{1,p}_0(\Omega), \nonumber \\
		& \Rightarrow & -\Delta_p u^*_\lambda(z) - \Delta u^*_\lambda(z) = \lambda f(z,u^*_\lambda(z))\ \mbox{for almost all}\ z\in\Omega,\ u^*_\lambda|_{\partial\Omega}=0,\ u^*_\lambda\ge0. \label{eq17}
	\end{eqnarray}
	
	By (\ref{eq17}) we see that if we can show that $u^*_\lambda\neq0$, then we have $u^*_\lambda\in S_+$. We argue by contradiction. So, suppose that $u^*_\lambda=0$. Then by (\ref{eq16}) we have
	\begin{equation}\label{eq18}
		u_n\rightarrow0\ \mbox{in}\ W^{1,p}_0(\Omega).
	\end{equation}
	
	Let $y_n=\frac{u_n}{||u_n||},\ n\in\NN$. Then $||y_n||=1,\ y_n\ge0$ for all $n\in\NN$ and so we may assume that
	\begin{equation}\label{eq19}
		y_n\xrightarrow{w}y\ \mbox{in}\ W^{1,p}_0(\Omega)\ \mbox{and}\ y_n\rightarrow y\ \mbox{in}\ L^p(\Omega).
	\end{equation}
	
	By (\ref{eq14}) we have
	\begin{equation}\label{eq20}
		||u_n||^{p-2}\langle A_p(y_n),h\rangle + \langle A(y_n),h\rangle = \lambda\int_\Omega\frac{N_f(u_n)}{||u_n||}hdz\ \mbox{for all}\ h\in W^{1,p}_0(\Omega).
	\end{equation}
	
	Hypothesis $H_1(i)$ implies that
	\begin{equation}\label{eq21}
		\int_\Omega\frac{N_f(u_n)}{||u_n||}hdz\rightarrow0\ \mbox{for all}\ h\in W^{1,p}_0(\Omega).
	\end{equation}
	
	Since $A_p(\cdot)$ is bounded (that is, maps bounded sets to bounded sets), it follows from (\ref{eq18}) and (\ref{eq19})  that
	\begin{equation}\label{eq22}
		||u_n||^{p-2} \langle A_p(y_n),h\rangle\rightarrow0\ \mbox{for all}\ h\in W^{1,p}_0(\Omega).
	\end{equation}
	
	In (\ref{eq20}) we choose $h=y_n-y\in W^{1,p}_0(\Omega)$, pass to the limit as $n\to\infty$ and use (\ref{eq21}), (\ref{eq22}). Then
	\begin{equation}\label{eq23}
		\lim_{n\to\infty}\langle A(y_n), y_n-y\rangle = 0.
	\end{equation}
	
	Note that
	\begin{eqnarray}
		& & \langle A(y_n),y_n-y\rangle - \langle A(y), y_n-y\rangle = ||D(y_n-y)||^2_2\ \mbox{for all}\ n\in\NN, \nonumber \\
		& \Rightarrow & ||D(y_n-y)||^2_2\to0\ \mbox{(from (\ref{eq19}), (\ref{eq23}))}, \label{eq24} \\
		& \Rightarrow & ||y||_{H^1_0(\Omega)} = 1. \label{eq25}
	\end{eqnarray}
	
	On the other hand from (\ref{eq20}), passing to the limit as $n\to\infty$ and using (\ref{eq21}), (\ref{eq22}), (\ref{eq24}), we obtain
	\begin{eqnarray*}
		& & \langle A(y),h\rangle = 0\ \mbox{for all}\ h\in W^{1,p}_0(\Omega), \\
		& \Rightarrow & y=0,\ \mbox{which contradicts (\ref{eq25})}.
	\end{eqnarray*}

Therefore $u^*_\lambda\neq0$ and so
	$$u^*_\lambda\in S_+\ \mbox{and}\ u^*_\lambda=\inf S_+.$$
	
	Similarly, we produce $v^*_\lambda\in W^{1,p}_0(\Omega)$ such that
	$$v^*_\lambda\in S_-\ \mbox{and}\ v^*_\lambda=\sup S_-.$$
The proof is now complete.
\end{proof}

Now our strategy becomes clear. We will truncate the reaction term at $\{v^*_\lambda(z),u^*_\lambda(z)\}$ in order to focus on the order interval $$[v^*_\lambda,u^*_\lambda]=\{u\in W^{1,p}(\Omega):v^*_\lambda(z)\leq u(z)\leq u^*_\lambda(z)\ \mbox{for almost all}\ z\in \Omega\}.$$
Working with the truncated functional and using variational tools (critical point theory), we will produce a nontrivial solution $y_\lambda\in [v^*_\lambda,u^*_\lambda]\cap C^1_0(\overline{\Omega}), y_\lambda\notin \{v^*_\lambda,u^*_\lambda\}$. The extremality of $v^*_\lambda$ and $u^*_\lambda$, forces $y_\lambda$ to be nodal.
\begin{prop}\label{prop14}
	If hypotheses $H_1$ hold, then there exists $\lambda^0_*\geq\lambda_*$ such that for all $\lambda>\lambda^0_*$ problem \eqref{eqP} admits a nodal solution $y_\lambda\in {\rm int}_{C^1_0(\overline{\Omega})}[v^*_\lambda,u^*_\lambda]$ (that is, $y_\lambda\in C^1_0(\overline{\Omega})$ and $u^*_\lambda-y_\lambda,y_\lambda-v^*_\lambda\in D_+$).
\end{prop}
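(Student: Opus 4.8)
The plan is to run a Morse-theoretic count on a truncated functional whose critical points are trapped in the order interval $[v^*_\lambda,u^*_\lambda]$. Since $S_+\subseteq[0,\eta]$, $S_-\subseteq[-\eta,0]$ and $\eta<\delta_1$, on this interval $f=\hat f$, so I truncate the reaction at the extremal solutions by setting
$$e_\lambda(z,x)=\begin{cases}\lambda f(z,v^*_\lambda(z))&\text{if }x<v^*_\lambda(z),\\ \lambda f(z,x)&\text{if }v^*_\lambda(z)\le x\le u^*_\lambda(z),\\ \lambda f(z,u^*_\lambda(z))&\text{if }x>u^*_\lambda(z),\end{cases}$$
together with the one-sided versions $e^{\pm}_\lambda$ obtained by additionally cutting at $0$, and I form the associated $C^1$-functionals $\psi_\lambda,\psi^{\pm}_\lambda$ on $W^{1,p}_0(\Omega)$. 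The truncated reactions have at most linear growth, so each functional is coercive, sequentially weakly lower semicontinuous and (via Proposition~\ref{prop5}) satisfies the PS-condition. Testing the corresponding equations with $(u-u^*_\lambda)^+$, $(v^*_\lambda-u)^+$ and $\mp u^{\mp}$ and invoking Proposition~\ref{prop3} confines $K_{\psi_\lambda}\subseteq[v^*_\lambda,u^*_\lambda]$, $K_{\psi^+_\lambda}\subseteq[0,u^*_\lambda]$, $K_{\psi^-_\lambda}\subseteq[v^*_\lambda,0]$; extremality of $u^*_\lambda,v^*_\lambda$ then forces $K_{\psi^+_\lambda}=\{0,u^*_\lambda\}$, $K_{\psi^-_\lambda}=\{0,v^*_\lambda\}$, and any element of $K_{\psi_\lambda}\setminus\{0,u^*_\lambda,v^*_\lambda\}$ is automatically a nodal solution of \eqref{eqP}.

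A structural remark shapes the rest of the argument. Because $H_1(i)$–$(iii)$ make $f$ superlinear at the origin while $p>2$, the estimate $\psi_\lambda(u)\ge\frac1p\|u\|^p-c\lambda\|u\|^r$ with $r>p$ shows that $0$ is a strict local minimizer, whereas the Ambrosetti–Rabinowitz inequality in $H_1(iii)$ gives $\psi_\lambda(u^*_\lambda)>0=\psi_\lambda(0)$. Thus $u^*_\lambda,v^*_\lambda$ are saddle points, not minimizers, and a plain minimization/sub–supersolution scheme cannot deliver the third solution. I would then compute the critical groups: $C_k(\psi_\lambda,0)=\delta_{k,0}\ZZ$ (local minimizer), $C_k(\psi_\lambda,\infty)=\delta_{k,0}\ZZ$ (coercivity, since $\psi_\lambda^c=\emptyset$ for $c<\inf\psi_\lambda$), and, the decisive point, $C_k(\psi_\lambda,u^*_\lambda)=C_k(\psi_\lambda,v^*_\lambda)=\delta_{k,1}\ZZ$.

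Granting these groups, assume for contradiction that $K_{\psi_\lambda}=\{0,u^*_\lambda,v^*_\lambda\}$. Then the Morse relation \eqref{eq1} reads $1+t+t=1+(1+t)Q(t)$, i.e. $(1+t)Q(t)=2t$, which is impossible for a formal series $Q$ with nonnegative integer coefficients. Hence there is a further critical point $y_\lambda$ with $v^*_\lambda\le y_\lambda\le u^*_\lambda$ and $y_\lambda\notin\{0,u^*_\lambda,v^*_\lambda\}$, which solves \eqref{eqP} because the truncation is inactive on $[v^*_\lambda,u^*_\lambda]$. Proposition~\ref{prop2} (nonlinear regularity) gives $y_\lambda\in C^1_0(\overline\Omega)$; extremality of $u^*_\lambda,v^*_\lambda$ rules out constant sign, so $y_\lambda$ is nodal; and the strong comparison Proposition~\ref{prop3}, fed by the monotonicity in $H_1(iv)$, upgrades this to $u^*_\lambda-y_\lambda,\ y_\lambda-v^*_\lambda\in D_+$, i.e. $y_\lambda\in\mathrm{int}_{C^1_0(\overline\Omega)}[v^*_\lambda,u^*_\lambda]$, possibly after enlarging $\lambda_*$ to some $\lambda^0_*$.

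The main obstacle is precisely the exact computation $C_k(\psi_\lambda,u^*_\lambda)=\delta_{k,1}\ZZ$. That $C_1\ne0$ is comparatively easy, since $u^*_\lambda$ is of mountain-pass type: the ray $s\mapsto\psi_\lambda(su^*_\lambda)$ has a descent direction toward $0$ while the cut-off forces growth for $s>1$. The hard part is excluding nonzero groups in degrees $\ge2$. In the one-sided functional $\psi^+_\lambda$ the origin is the global minimum and $u^*_\lambda$ is the only other critical point, so its one-sided critical groups are degenerate (the Morse relation makes $\sum_k\mathrm{rank}\,C_k(\psi^+_\lambda,u^*_\lambda)t^k$ divisible by $1+t$); one must then use homotopy invariance of critical groups together with the extra nodal direction available in $\psi_\lambda$ but cut off in $\psi^+_\lambda$ to collapse the computation to degree one. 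This step is made technical by the non-homogeneous $(p,2)$ operator and by the second-order kink that the cut-off at $u^*_\lambda$ introduces. Were the degree-$2$ groups to survive, the Morse relation would balance without any nodal solution, so this is exactly the point on which the theorem turns.
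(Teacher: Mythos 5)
Your overall frame (truncate at the extremal solutions, confine the critical sets, finish with regularity, extremality and Proposition \ref{prop3}) matches the paper, but the engine of your argument is different from the paper's and contains a genuine gap at its decisive step. The paper does \emph{not} treat $u^*_\lambda,v^*_\lambda$ as saddle points: in Claim \ref{claim2} of its proof it argues that $u^*_\lambda$ and $v^*_\lambda$ are \emph{local minimizers} of $\hat{\sigma}_\lambda$ (via global minimization of the one--sided functionals and Proposition \ref{prop2}), then produces $y_\lambda$ by running Theorem \ref{th1} between these two minimizers, and uses critical groups only once, to separate $y_\lambda$ from $0$ by comparing $C_1(\hat{\sigma}_\lambda,y_\lambda)\neq 0$ with $C_k(\hat{\sigma}_\lambda,0)=\delta_{k,0}\ZZ$. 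Your route instead needs the \emph{exact} computation $C_k(\psi_\lambda,u^*_\lambda)=C_k(\psi_\lambda,v^*_\lambda)=\delta_{k,1}\ZZ$ for \emph{all} $k$, and this is precisely what you do not prove. Two specific problems: (i) the inference ``$\psi_\lambda(u^*_\lambda)>0=\psi_\lambda(0)$, hence $u^*_\lambda$ is a saddle point'' is a non sequitur --- a critical point whose value exceeds that of another critical point can perfectly well be a local minimizer, with $C_k=\delta_{k,0}\ZZ$, in which case your Morse count $1+2t=1+(1+t)Q(t)$ collapses to $3=1+(1+t)Q(t)$ and yields nothing; (ii) even $C_1(\psi_\lambda,u^*_\lambda)\neq 0$ is not justified as stated, since $u^*_\lambda=\inf S_+$ carries no minimax characterization, and your ``descent along the ray'' heuristic fails because $\langle\psi_\lambda'(u^*_\lambda),u^*_\lambda\rangle=0$. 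Under $H_1$ the reaction is merely Carath\'eodory and $\psi_\lambda$ is only $C^1$, so the standard tools for pinning down all critical groups at a putative mountain--pass point (the $C^2$ results the paper invokes in Theorem \ref{th16}, which require the stronger hypotheses $H_2$ and a homotopy to the $C^2$ functional $\hat{\varphi}_\lambda$) are unavailable. Since, as you yourself note, the theorem turns exactly on excluding the higher groups, the proposal is incomplete.

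That said, your structural observation deserves credit: combining Proposition \ref{prop10} with the fact that $\hat{G}=F=\hat{F}$ on the order interval does give $\hat{\sigma}_\lambda(u^*_\lambda)=\hat{\varphi}_\lambda(u^*_\lambda)>0=\hat{\sigma}_\lambda(0)$, which sits uneasily with the paper's Claim \ref{claim2}, where the global minimizer of the coercive functional $\hat{\sigma}^+_\lambda$ is identified with $u^*_\lambda$ at a \emph{negative} level (the constant $\tau$ in \eqref{eq30} depends on $\lambda$ and is not bounded away from $0$ as $\lambda\to+\infty$, so the threshold $\lambda^0$ there is not actually $\lambda$--independent). So you have correctly sensed a soft spot in the minimization route. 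But detecting that tension does not validate your alternative: ``not the global minimizer of $\hat{\sigma}^+_\lambda$'' does not imply ``not a local minimizer of $\hat{\sigma}_\lambda$,'' and without either the local minimality (the paper's route) or the full critical--group computation (your route), the existence of the fourth critical point $y_\lambda$ is not established.
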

\begin{proof}
	Let $u^*_\lambda\in D_+$ and $v^*_\lambda\in-D_+$ be the two extremal constant sign solutions produced by Proposition \ref{prop13}. We introduce the Carath\'eodory function $\hat{g}(z,x)$ defined by
	\begin{equation}\label{eq26}
		\hat{g}(z,x)=\left\{\begin{array}{ll}
			f(z,v^*_\lambda(z)) & \mbox{if}\ x<v^*_\lambda(z)\\
			f(z,x)				& \mbox{if}\ v^*_\lambda(z)\leq x \leq u^*_\lambda(z)\\
			f(z,u^*_\lambda(z))	& \mbox{if}\ u^*_\lambda(z)<x.
		\end{array}\right.
	\end{equation}
	
	We also consider the positive and negative truncations of $\hat{g}(z,\cdot)$, namely the Carath\'eodory functions
	$$\hat{g}_\pm(z,x)=\hat{g}(z,\pm x^\pm).$$
	
	We set $\hat{G}(z,x)=\int^x_0\hat{g}(z,s)ds$ and $\hat{G}_\pm(z,x)=\int^x_0\hat{g}_\pm(z,s)ds$ and consider the $C^1$-functionals $\hat{\sigma}_\lambda, \hat{\sigma}^\pm_\lambda: W^{1,p}_0(\Omega)\rightarrow\RR$ defined by
	\begin{eqnarray*}
		\hat{\sigma}_\lambda(u) & = & \frac{1}{p}||Du||^p_p+\frac{1}{2}||Du||^2_2-\lambda\int_\Omega\hat{G}(z,u)dz,\\
		\hat{\sigma}^\pm_\lambda(u) & = & \frac{1}{p}||Du||^p_p+\frac{1}{2}||Du||^2_2-\lambda\int_\Omega\hat{G}_\pm(z,u)dz\ \mbox{for all}\ u\in W^{1,p}_0(\Omega).
	\end{eqnarray*}
	\begin{claim}\label{claim1}
		$K_{\hat{\sigma}_\lambda}\subseteq[v^*_\lambda, u^*_\lambda]\cap C^1_0 (\overline{\Omega}), K_{\hat{\sigma}^+_\lambda}=\{0,u^*_\lambda\}, K_{\hat{\sigma}^-_\lambda}=\{v^*_\lambda,0\}$.
	\end{claim}
	
	Let $u\in K_{\hat{\sigma}_\lambda}.$ Then
	\begin{equation}\label{eq27}
		\langle A_p(u),h\rangle + \langle A(u),h\rangle = \int_\Omega \lambda\hat{g}(z,u)hdz\ \mbox{for all}\ h\in W^{1,p}_0(\Omega).
	\end{equation}
	
	In (\ref{eq27}) we choose $h=(u-u^*_\lambda)^+\in W^{1,p}_0(\Omega)$. Then
	\begin{eqnarray*}
		&&\langle A_p(u), (u-u^*_\lambda)^+ \rangle + \langle A(u),(u-u^*_\lambda)^+ \rangle \\
		&=&\lambda\int_\lambda f(z,u^*_\lambda)(u-u^*_\lambda)^+dz\ \mbox{(see (\ref{eq26}))} \\
		&=&\langle A_p (u^*_\lambda), (u-u^*_\lambda)^+\rangle+\langle A(u),(u-u^*_\lambda)^+\rangle\ \mbox{(since } u^*_\lambda\in S_+),\\
		&\Rightarrow & \langle A_p(u)-A_p(u^*_\lambda),(u-u^*_\lambda)^+\rangle + \langle A(u)-A(u^*_\lambda), (u-u^*_\lambda)^+\rangle=0,\\
		&\Rightarrow &||D(u-u^*_\lambda)^+||^2_2=0,\\
		&\Rightarrow &u\leq u^*_\lambda.
	\end{eqnarray*}
	
	Similarly, if in (\ref{eq27}) we choose $h=(v^*_\lambda-u)^+\in W^{1,p}_0(\Omega)$, then we obtain
	$$v^*_\lambda\leq u.$$
	
	So, we have proved that
	$$u\in[v^*_\lambda, u^*_\lambda].$$
	
	Moreover, as before, the nonlinear regularity theory (see the proof of Proposition \ref{prop12}), implies that $u\in C^1_0(\overline{\Omega})$. Therefore we conclude that
	$$K_{\overline{\sigma}_\lambda}\subseteq[v^*_\lambda,u^*_\lambda]\cap C^1_0(\overline{\Omega}).$$
	
	In a similar fashion we show that
	$$K_{\hat{\sigma}^+_\lambda}\subseteq[0,u^*_\lambda]\cap C_+\ \mbox{and}\ K_{\hat{\sigma}^-_\lambda}\subseteq[v^*_\lambda,0]\cap(-C_+).$$
	
	The extremality of $u^*_\lambda$ and $v^*_\lambda$, implies that
	$$K_{\hat{\sigma}^+_\lambda}=\{0,u^*_\lambda\}\ \mbox{and}\ K_{\hat{\sigma}^-_\lambda}=\{v^*_\lambda,0\}.$$
	
	This proves Claim \ref{claim1}.
	
	On account of Claim \ref{claim1}, we see that we may assume that
	\begin{equation}\label{eq28}
		K_{\hat{\sigma}_\lambda}\ \mbox{is finite}.
	\end{equation}
	
	Otherwise we evidently already have an infinity of smooth nodal solutions and so we are done.
	\begin{claim}\label{claim2}
		$u^*_\lambda\in D_+$ and $v^*_\lambda\in -D_+$ are local minimizers of the functional $\hat{\sigma}_\lambda$.
	\end{claim}
	
	From (\ref{eq26}) it is clear that $\hat{\sigma}^+_\lambda$ is coercive. Also, $\hat{\sigma}^+_\lambda$ is sequentially weakly lower semicontinuous. So, we can find $\hat{u}^*_\lambda\in W^{1,p}_0(\Omega)$ such that
	\begin{equation}\label{eq29}
		\hat{\sigma}^+_\lambda(\hat{u}^*_\lambda)=\inf\left[\hat{\sigma}^+_\lambda(u):u\in W^{1,p}_0(\Omega)\right].
	\end{equation}
	
	Let $\hat{u}_1(p)$ be the positive principal eigenfunction of $(-\Delta_p,W^{1,p}_0(\Omega))$. We know that $\hat{u}_1(p)\in D_+$ (see Motreanu, Motreanu and Papageorgiou \cite{16}). Recall that $u^*_\lambda\in D_+$. So, by invoking Lemma 3.6 of Filippakis and Papageorgiou \cite{9}, we can find $\tau>0$ such that
	\begin{equation}\label{eq30}
		\tau\hat{u}_1(p)=\left[\frac{1}{2}u^*_\lambda,u^*_\lambda\right].
	\end{equation}
	
	Evidently, $\frac{1}{2}\leq\tau\leq2$. Hypothesis $H_1(ii)$ implies that there exists $\xi>0$ such that
	\begin{equation}\label{eq31}
		F(z,x)\geq\xi|x|^\beta\ \mbox{for almost all}\ z\in\Omega,\ \mbox{all}\ 0\leq x\leq \eta
	\end{equation}
	(here $\eta\in(0,\delta_1)$ is as in (\ref{eq3})). We have
	$$\hat{\sigma}^+_\lambda(\tau\hat{u}_1(p))\leq\frac{\tau^p}{p}\hat{\lambda}_1(p)+\frac{\tau^2}{2}||D\hat{u}_1(p)||^2_2-\lambda\xi\tau^\beta||\hat{u}_1(p)||^\beta_\beta,$$
	with $\hat{\lambda}_1(p)>0$ being the principal eigenvalue of $(-\Delta_p,W^{1,p}_0(\Omega)).$
	
	It follows that
	\begin{equation}\label{eq32}
		\begin{array}{c}
			\hat{\sigma}^+_\lambda(\tau\hat{u}_1(p))<0\\\\
			\mbox{if and only if}\\\\
			\frac{\frac{\tau^{p-2}}{p}\hat{\lambda}_1(p)+
\frac{1}{2}||D\hat{u}_1(p)||^2_2}{\xi\tau^{\beta-2}||\hat{u}_1(p)||^\beta_\beta}<\lambda.
		\end{array}
	\end{equation}
	
	Note that
	$$\frac{\frac{\tau^{p-2}}{p}\hat{\lambda}_1(p)+\frac{1}{2}||D\hat{u}_1(p)||^2_2}{\xi\tau^{\beta-2}||\hat{u}_1(p)||^\beta_\beta} \leq \frac{\frac{2^{\beta+p-4}}{p}\hat{\lambda}_1(p)+2^{\beta-3}||D\hat{u}_1(p)||^2_2}{\xi||\hat{u}_1(p)||^\beta_\beta}$$
	(recall that $\frac{1}{2}\leq\tau\leq2$).
	
	So, if we let $\lambda^0=\frac{\frac{2^{\beta+p-4}}{p}\hat{\lambda}_1(p)+2^{\beta-3}||D\hat{u}_1(p)||^2_2}{\xi||\hat{u}_1(p)||^\beta_\beta}$ and define
	$$\lambda^0_*=\max\{\lambda^0, \lambda_*\},$$
	then we infer from (\ref{eq32}) that
	\begin{eqnarray*}
		&&\hat{\sigma}^+_\lambda(\tau\hat{u}_1(p))<0\ \mbox{for all}\ \lambda>\lambda^0_*,\\
		&\Rightarrow & \hat{\sigma}^+_\lambda(\hat{u}^*_\lambda)<0=\hat{\sigma}^+_\lambda(0)\ \mbox{for all}\ \lambda>\lambda^0_*\ \mbox{(see (\ref{eq29}))},\\
		&\Rightarrow &\hat{u}^*_\lambda\neq0\ \mbox{and}\ \hat{u}^*_\lambda\in K_{\hat{\sigma}^+_\lambda}\ \mbox{for all}\ \lambda>\lambda^0_*\ \mbox{(see (\ref{eq29}))},\\
		&\Rightarrow &\hat{u}^*_\lambda=u^*_\lambda\ \mbox{for all}\ \lambda>\lambda^0_*\ \mbox{(see Claim \ref{claim1}).}
	\end{eqnarray*}
	
	By (\ref{eq26}) it is clear that $\hat{\sigma}^+_\lambda|_{C_+}=\hat{\sigma}_\lambda|_{C_+}$. Since $u^*_\lambda\in D_+$, it follows from  (\ref{eq29}) that
	\begin{eqnarray*}
		&&u^*_\lambda\ \mbox{is a local}\ C^1(\overline{\Omega})-\mbox{minimizer of}\ \hat{\sigma}_\lambda,\\
		&\Rightarrow & u^*_\lambda\ \mbox{is a local}\ W^{1,p}_0(\Omega)-\mbox{minimizer of}\ \hat{\sigma}_\lambda\ \mbox{(see Proposition \ref{prop2}).}
	\end{eqnarray*}
	
	Similarly for $v^*_\lambda\in -D_+$, using this time the functional $\hat{\sigma}^-_\lambda$.
	
	This proves Claim \ref{claim2}.
	
\smallskip
	Without any loss of generality, we may assume that
	$$\hat{\sigma}_\lambda(v^*_\lambda)\leq\hat{\sigma}(u^*_\lambda).$$
	
	By (\ref{eq28}) and Claim \ref{claim2}, we see that we can find small $\rho\in(0,1)$ such that
	\begin{equation}\label{eq33}
		\hat{\sigma}_\lambda(v^*_\lambda)\leq\hat{\sigma}_\lambda(u^*_\lambda)<\inf\left[ \hat{\sigma}_\lambda(u):||u-u^*_\lambda||=\rho \right]=\hat{m}_\lambda,\ ||v^*_\lambda-u^*_\lambda||>\rho
	\end{equation}
	(see Aizicovici, Papageorgiou and Staicu \cite{1}, proof of Proposition 29). The functional $\hat{\sigma}_\lambda$ is coercive (see (\ref{eq26})). Hence
	\begin{equation}\label{eq34}
		\hat{\sigma}_\lambda\ \mbox{satisfies the PS-condition}.
	\end{equation}
	
	Then (\ref{eq33}), (\ref{eq34}) permit the use of Theorem \ref{th1} (the mountain pass theorem). So, we can find $y_\lambda\in W^{1,p}_0(\Omega)$ such that
	\begin{equation}\label{eq35}
		y_\lambda\in K_{\hat{\sigma}_\lambda}\ \mbox{and}\ \hat{m}_\lambda\leq\hat{\sigma}_\lambda(y_\lambda).
	\end{equation}
	
	Using (\ref{eq33}), (\ref{eq35}) and Claim \ref{claim1}, we have
	\begin{eqnarray*}
		&&y_\lambda\in \left[ v^*_\lambda,u^*_\lambda \right]\cap C^1_0(\overline{\Omega}), \ y_\lambda\notin\{v^*_\lambda,u^*_\lambda\},\\
		&\Rightarrow & y_\lambda\ \mbox{is a smooth solution of}\ \eqref{eqP}.
	\end{eqnarray*}
	
	We need to show that $y_\lambda\neq0$, in order to conclude that $y_\lambda$ is a smooth nodal solution of \eqref{eqP} for $\lambda>\lambda^0_*$.
	
	From the previous argument we have that $y_\lambda$ is a critical point of mountain pass type for the functional $\hat{\sigma}_\lambda$. Therefore we have
	\begin{equation}\label{eq36}
		C_1(\hat{\sigma}_\lambda,y_\lambda)\neq0
	\end{equation}
	(see Motreanu, Motreanu and Papageorgiou \cite[Corollary 6.81, p. 168]{16}).
	
	Hypothesis $H_1(i)$ implies that we can find $\delta_2\in(0,\eta)$ such that
	\begin{equation}\label{eq37}
		F(z,x)\leq\frac{1}{r}|x|^r\ \mbox{for almost all}\ z\in\Omega,\ \mbox{all}\ |x|\leq\delta_2.
	\end{equation}
	
	Since $u^*_\lambda\in D_+$ and $v^*_\lambda\in -D_+$, we see that
	$$\mbox{int}_{C^1_0(\overline{\Omega})}[v^*_\lambda,u^*_\lambda]\neq\emptyset.$$
	
	So, we can find $\delta_3>0$ such that
	\begin{equation}\label{eq38}
		B^{C^1_0(\overline{\Omega})}_{\delta_3}=\{u\in C^1_0(\overline{\Omega}):||u||_{C^1_0(\overline{\Omega})}<\delta_3 \}\subseteq [v^*_\lambda,u^*_\lambda]\cap C^1_0(\overline{\Omega}).
	\end{equation}
	
	Then for $u\in B^{C^1_0(\overline{\Omega})}_{\delta_3}$, we have
	\begin{eqnarray*}
		\hat{\sigma}_\lambda(u) & \geq & \frac{1}{p}||Du||^p_p+\frac{1}{2}||Du||^2_2-\frac{1}{r}||u||^r_r\ \mbox{(see (\ref{eq37}), (\ref{eq38}))}\\
		& \geq & \frac{1}{p}||u||^p-c_{10}||u||^r\ \mbox{for some}\ c_{10}>0\ \mbox{(recall that $r<p^*$)}.
	\end{eqnarray*}
	
	Since $r>p$, we see that by choosing $\delta_3>0$ even smaller if necessary, we have
	\begin{eqnarray}
		& & \hat{\sigma}_\lambda(u)\geq0,\nonumber\\
		&\Rightarrow &u=0\ \mbox{is a local}\ C^1_0(\overline{\Omega})-\mbox{minimizer of}\ \hat{\sigma}_\lambda,\nonumber\\
		&\Rightarrow &u=0\ \mbox{is a local}\ W^{1,p}_0(\Omega)-\mbox{minimizer of}\ \hat{\sigma}_\lambda\ \mbox{(see Proposition \ref{prop2}),}\nonumber\\
		&\Rightarrow &C_k(\hat{\sigma}_\lambda,0)=\delta_{k,0}\ZZ\ \mbox{for all}\ k\in\NN_0.\label{eq39}
	\end{eqnarray}
	
	Comparing (\ref{eq36}) and (\ref{eq39}), we infer that
	\begin{eqnarray*}
		&&y_\lambda\neq0,\\
		&\Rightarrow &y_\lambda\in C^1_0(\overline{\Omega})\ \mbox{is a nodal solution of}\ \eqref{eqP}\ \mbox{for}\ \lambda>\lambda^0_*.
	\end{eqnarray*}
	
	Let $\hat{\xi}>0$ be as postulated by hypothesis $H_1(iv)$. Then for $x>y,\ x,y\in[-\eta,\eta]$ we have
	\begin{eqnarray*}
		f(z,x)-f(z,y) 	& \geq & -\hat{\xi}(|x|^{p-2}x-|y|^{p-2}y)\\
						& \geq & -\hat{\xi}c_{11}|x-y|\ \mbox{for some}\ c_{11}>0\ \mbox{(recall that $p>2$)}.
	\end{eqnarray*}
	
	Because of this inequality and since $u^*_\lambda,v^*_\lambda$ are solutions of \eqref{eqP}, $u^*_\lambda\neq v^*_\lambda$, we have from the tangency principle of Pucci and Serrin \cite[p. 35]{22},
	\begin{equation}\label{eq40}
		y_\lambda(z)<u^*_\lambda(z)\ \mbox{for all}\ z\in\Omega.
	\end{equation}
	
	Let $\hat{\xi}_0>\hat{\xi}$. We have
	\begin{eqnarray}\label{eq41}
		&   & -\Delta_p y_\lambda(z)-\Delta y_\lambda(z)+\lambda \hat{\xi}_0|y_\lambda(z)|^{p-2}y_\lambda(z) \nonumber\\
		& = & \lambda\left[ f(z,y_\lambda(z))+\hat{\xi}_0|y_\lambda(z)|^{p-2}y_\lambda(z) \right] \nonumber\\
		& = & \lambda\left[ f(z,y_\lambda(z))+\hat{\xi}|y_\lambda(z)|^{p-2}y_\lambda(z)+(\hat{\xi}_0-\hat{\xi})|y_\lambda(z)|^{p-2}y_\lambda(z) \right] \nonumber\\
		& \leq & \lambda \left[ f(z,u^*_\lambda(z))+\hat{\xi}u^*_\lambda(z)^{p-1}+(\hat{\xi}_0-\xi)u^*_\lambda(z)^{p-1} \right]\ \mbox{(see hypothesis $H_1(iv)$)}\nonumber \\
		& = & -\Delta_p u^*_\lambda(z)-\Delta u^*_\lambda(z)+\lambda\hat{\xi}_0u^*_\lambda(z)^{p-1}\ \mbox{for almost all}\ z\in\Omega\ \mbox{(since $u^*_\lambda\in S_+$)}.
	\end{eqnarray}
	
	Set $h_1(z)=f(z,y_\lambda(z))+\hat{\xi}|y_\lambda(z)|^{p-2}y_\lambda(z)+(\hat{\xi}_0-\hat{\xi})|y_\lambda(z)|^{p-2}y_\lambda(z)$ and\\
\indent		$\quad \ \ h_2(z)=f(z,u^*_\lambda(z))+\hat{\xi}u^*_\lambda(z)^{p-1}+(\hat{\xi}_0-\hat{\xi})u^*_\lambda(z)^{p-1}.$
	
	Since $u^*_\lambda,y_\lambda\in C^1(\overline{\Omega})$ and using hypothesis $H_1(iv)$ and (\ref{eq40}), we see that
	$$h_1\prec h_2.$$
	
	Then it follows from (\ref{eq41}) and Proposition \ref{prop3}  that
	$$u^*_\lambda-y_\lambda\in D_+.$$
	
	Similarly, we show that
	$$y_\lambda-v^*_\lambda\in D_+.$$
	
	Therefore
	$$y_\lambda\in {\rm int}_{C^1_0(\overline{\Omega})}[v^*_\lambda,u^*_\lambda].$$
This completes the proof.
\end{proof}

Se, we can state our first multiplicity theorem for problem \eqref{eqP}.
\begin{theorem}\label{th15}
	If hypotheses $H_1$ hold, then we can find $\lambda^0_*>0$ such that for all $\lambda > \lambda^0_*$ problem \eqref{eqP} has at least three nontrivial solutions
	$$\hat{u}_\lambda \in D_+,\ \hat{v}_\lambda \in -D_+\ and\ y_\lambda \in {\rm int}_{C_0^1(\overline{\Omega})}[\hat{v_\lambda},\hat{u}_\lambda]\ \mbox{nodal}.$$
\end{theorem}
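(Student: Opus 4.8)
The plan is to assemble the three required solutions directly from the existence results already in place, so that Theorem \ref{th15} becomes essentially a synthesis of Propositions \ref{prop12}--\ref{prop14} with no new analytic input.

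First I would invoke Proposition \ref{prop13} to obtain, for every $\lambda \geq \lambda_*$, the smallest positive solution $u^*_\lambda \in D_+$ and the biggest negative solution $v^*_\lambda \in -D_+$ of problem \eqref{eqP}. Setting $\hat{u}_\lambda := u^*_\lambda$ and $\hat{v}_\lambda := v^*_\lambda$ furnishes two nontrivial smooth solutions of constant sign, with $\hat{u}_\lambda \in D_+$ and $\hat{v}_\lambda \in -D_+$, which supplies the first two assertions of the theorem. (Alternatively one may use the solutions produced in Proposition \ref{prop12}; the extremal choice is convenient because it is exactly the pair that bounds the order interval used next.)

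Next I would apply Proposition \ref{prop14}, which produces, for the threshold $\lambda^0_* \geq \lambda_*$ fixed there, a nodal solution $y_\lambda \in {\rm int}_{C^1_0(\overline{\Omega})}[v^*_\lambda, u^*_\lambda]$ for all $\lambda > \lambda^0_*$. Since the order interval is built from the very solutions $u^*_\lambda = \hat{u}_\lambda$ and $v^*_\lambda = \hat{v}_\lambda$, this is precisely the statement that $y_\lambda \in {\rm int}_{C^1_0(\overline{\Omega})}[\hat{v}_\lambda, \hat{u}_\lambda]$, matching the third assertion.

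Finally I would verify that the three solutions are pairwise distinct and all nontrivial, which is immediate from the sign information: $\hat{u}_\lambda$ is strictly positive in $\Omega$, $\hat{v}_\lambda$ is strictly negative in $\Omega$, and $y_\lambda$ changes sign, so no two of them can coincide and none equals the trivial solution. Taking the threshold of the theorem to be the $\lambda^0_*$ of Proposition \ref{prop14} then completes the argument. There is no substantive obstacle remaining at this stage: the genuine difficulty---the variational existence of the constant-sign solutions, the extremality argument, and the mountain-pass plus critical-group comparison \eqref{eq36}--\eqref{eq39} that both locates $y_\lambda$ strictly between the extremal solutions and forces $y_\lambda \neq 0$---has already been discharged in Propositions \ref{prop12}--\ref{prop14}, so the theorem is their direct corollary.
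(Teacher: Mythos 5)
Your proposal is correct and coincides with the paper's own treatment: the paper gives no separate proof of Theorem \ref{th15}, presenting it as an immediate consequence of Propositions \ref{prop12}--\ref{prop14}, exactly as you assemble it (with the same identification of $\hat{u}_\lambda,\hat{v}_\lambda$ with the extremal solutions $u^*_\lambda,v^*_\lambda$ bounding the order interval). Nothing further is needed.
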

	
We can improve this theorem and produce a second nodal solution provided we strengthen the conditions on $f(z,\cdot)$.

The new hypotheses on the reaction $f(z,x)$ are the following.

\smallskip
$H_2$: $f:\Omega\times\RR\rightarrow\RR$ is a measurable function such that for almost all $z\in\Omega$, $f(z, 0)=0$, $f(z,\cdot) \in C^1(\RR, \RR)$ and
\begin{itemize}
	\item [(i)] there exists $r\in(p,p^*)$ such that $\lim_{x\rightarrow0}\frac{f(z,x)}{|x|^{r-2}x}=0$ uniformly for almost all $z\in\Omega$;
	\item [(ii)] if $F(z,x)=\int_0^x f(z,s)ds$, then there exists $\beta\in(r,p^*)$ such that $\lim_{x\rightarrow0}\frac{F(z,x)}{|x|^\beta}=+\infty$ uniformly for almost all $z\in\Omega$;
	\item [(iii)] there exists $q\in(p,p^*)$ and $\delta>0$ such that
			\begin{eqnarray*}
				0<qF(z,x)\le f(z,x)x\ \mbox{for almost all}\ z\in\Omega,\ \mbox{all}\ 0<|x|\le\delta, \\
				|f'_x(z,x)|\le a_0(z)\ \mbox{for almost all}\ z\in\Omega\ \mbox{and all}\ |x|\le\delta\ \mbox{with}\ a_0\in L^\infty(\Omega).
			\end{eqnarray*}
\end{itemize}
\begin{remark}
	Evidently, hypothesis $H_1(i)$ implies that $f'_x(z,0)=0$ for almost all $z\in\Omega$. In the framework of the above conditions, hypothesis $H_1(iv)$ is automatically satisfied by the mean value theorem and hypothesis $H_2(iii)$. Therefore, hypotheses $H_2$ are a more restricted version of hypotheses $H_1$.
\end{remark}
\begin{theorem}\label{th16}
	If hypotheses $H_2$ hold, then there exists $\lambda_*^0>0$ such that for all $\lambda>\lambda_*^0$ problem \eqref{eqP} has at least four nontrivial smooth solutions
	\begin{eqnarray*}
		& & \hat{u}_\lambda\in D_+,\ \hat{v}_\lambda \in -D_+,\\
		& & y_\lambda, \hat{y}_\lambda \in {\rm int}_{C^1_0(\overline{\Omega})}[\hat{v}_\lambda,\hat{u}_\lambda]\ nodal.
	\end{eqnarray*}
\end{theorem}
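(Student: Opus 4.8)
The plan is to keep the three solutions already furnished by Theorem \ref{th15} (legitimate, since by the Remark hypotheses $H_2$ are stronger than $H_1$) and to produce a fourth one by Morse theory. Concretely, let $u^*_\lambda\in D_+$, $v^*_\lambda\in-D_+$ be the extremal constant-sign solutions of Proposition \ref{prop13}, and let $y_\lambda\in{\rm int}_{C^1_0(\overline{\Omega})}[v^*_\lambda,u^*_\lambda]$ be the nodal critical point of mountain-pass type of the truncated functional $\hat{\sigma}_\lambda$ built in Proposition \ref{prop14}. I would run the Morse relation \eqref{eq1} for $\hat{\sigma}_\lambda$ to force a further critical point $\hat{y}_\lambda$, which the extremality of $u^*_\lambda,v^*_\lambda$ will pin down as a second nodal solution. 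We may assume $K_{\hat{\sigma}_\lambda}$ is finite, since otherwise Claim \ref{claim1} already delivers infinitely many smooth nodal solutions.

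First I would collect the critical groups of the known critical points. By Claim \ref{claim2}, $u^*_\lambda$ and $v^*_\lambda$ are local minimizers of $\hat{\sigma}_\lambda$, and by \eqref{eq39} so is $0$; hence $C_k(\hat{\sigma}_\lambda,u^*_\lambda)=C_k(\hat{\sigma}_\lambda,v^*_\lambda)=C_k(\hat{\sigma}_\lambda,0)=\delta_{k,0}\ZZ$ for all $k\in\NN_0$. Since $\hat{\sigma}_\lambda$ is coercive (see \eqref{eq26}) and bounded below, for $c$ very negative $\hat{\sigma}_\lambda^c=\emptyset$, so $C_k(\hat{\sigma}_\lambda,\infty)=H_k(W^{1,p}_0(\Omega),\emptyset)=\delta_{k,0}\ZZ$ for all $k\in\NN_0$ (the space is contractible).

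The crux, and the place where $H_2$ is indispensable, is to show
$$C_k(\hat{\sigma}_\lambda,y_\lambda)=\delta_{k,1}\ZZ\ \mbox{for all}\ k\in\NN_0.$$
Here is where I expect the main difficulty. Under $H_2$ the reaction is $f(z,\cdot)\in C^1(\RR)$ with $|f'_x(z,\cdot)|\le a_0\in L^\infty(\Omega)$ near the origin; since $p>2$ the $p$-energy $u\mapsto\frac{1}{p}||Du||^p_p$ is twice continuously differentiable, and because $||u^*_\lambda||_\infty,||v^*_\lambda||_\infty\le\eta$ the solution $y_\lambda$ ranges in the region where $\hat{g}(z,\cdot)=f(z,\cdot)$, so the energy is $C^2$ there. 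Combining this $C^2$-regularity with the mountain-pass character of $y_\lambda$ (which already gives $C_1(\hat{\sigma}_\lambda,y_\lambda)\neq0$ by \eqref{eq36}), the shifting theorem together with the Morse-index estimate for mountain-pass points (Morse index at most $1$) concentrates the critical groups in degree one. \textbf{This $C^2$ computation is the main obstacle}: in the $C^1$-only framework of $H_1$ one controls $C_1$ but cannot determine the whole group, and one must moreover check that the truncation defining $\hat{g}$ does not corrupt the local $C^2$-structure in the $W^{1,p}_0(\Omega)$-topology. I would settle this by comparing $\hat{\sigma}_\lambda$ near $y_\lambda$ with a globally $C^2$ functional that coincides with it on $[v^*_\lambda,u^*_\lambda]$ and by invoking the invariance of critical groups under the $C^1_0(\overline{\Omega})$–$W^{1,p}_0(\Omega)$ passage in the spirit of Proposition \ref{prop2}.

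With these data I would argue by contradiction. If $K_{\hat{\sigma}_\lambda}=\{0,u^*_\lambda,v^*_\lambda,y_\lambda\}$, then \eqref{eq1} reads
$$3+t=1+(1+t)Q(t),\quad\mbox{i.e.}\quad(1+t)Q(t)=2+t,$$
and setting $t=-1$ yields $0=1$, a contradiction. Hence $\hat{\sigma}_\lambda$ possesses a further critical point $\hat{y}_\lambda\notin\{0,u^*_\lambda,v^*_\lambda,y_\lambda\}$. By Claim \ref{claim1}, $\hat{y}_\lambda\in[v^*_\lambda,u^*_\lambda]\cap C^1_0(\overline{\Omega})$ solves \eqref{eqP}; being nonzero and distinct from $u^*_\lambda,v^*_\lambda$, it cannot be of constant sign without violating the extremality of Proposition \ref{prop13}, so it is nodal. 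Finally, repeating verbatim the comparison argument closing the proof of Proposition \ref{prop14} — based on $H_2(iii)$, the tangency principle of Pucci and Serrin, and Proposition \ref{prop3} — gives $u^*_\lambda-\hat{y}_\lambda,\ \hat{y}_\lambda-v^*_\lambda\in D_+$, that is $\hat{y}_\lambda\in{\rm int}_{C^1_0(\overline{\Omega})}[v^*_\lambda,u^*_\lambda]$, with $\hat{y}_\lambda\neq y_\lambda$. Together with the constant-sign solutions $\hat{u}_\lambda\in D_+$, $\hat{v}_\lambda\in-D_+$, this produces the four desired nontrivial smooth solutions.
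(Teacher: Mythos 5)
Your overall architecture coincides with the paper's: keep the three solutions of Theorem \ref{th15}, compute critical groups ($\delta_{k,0}\ZZ$ at the two constant-sign local minimizers and at $0$, $\delta_{k,0}\ZZ$ at infinity by coercivity, $\delta_{k,1}\ZZ$ at $y_\lambda$), and run the Morse relation \eqref{eq1} at $t=-1$ to force a fifth critical point, which extremality and Proposition \ref{prop3} identify as a second nodal solution interior to the order interval; your contradiction $0=1$ is the paper's $2=1$.

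The soft spot is exactly where you flag it, and your proposed repair does not close it. The assertion that ``the energy is $C^2$ there'' because $y_\lambda$ takes values where $\hat g=f$ is false in the relevant topology: the critical groups of $\hat\sigma_\lambda$ at $y_\lambda$ are computed in a $W^{1,p}_0(\Omega)$-neighborhood, and every such neighborhood contains functions exceeding $u^*_\lambda$ or dropping below $v^*_\lambda$ on sets of positive measure, so the truncation is active arbitrarily close to $y_\lambda$ and $\hat\sigma_\lambda$ is only $C^1$ there. Invoking ``invariance under the $C^1_0(\overline\Omega)$--$W^{1,p}_0(\Omega)$ passage in the spirit of Proposition \ref{prop2}'' is not the right tool either, since that proposition concerns local minimizers only. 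What the paper actually does is: (i) form the homotopy $h_\lambda(t,u)=(1-t)\hat\sigma_\lambda(u)+t\hat\varphi_\lambda(u)$ with the globally $C^2$ functional $\hat\varphi_\lambda$ (this is where $H_2$ enters, making $\hat f(z,\cdot)$ of class $C^1$); (ii) rule out a sequence of critical points of $h_\lambda(t_n,\cdot)$ converging to $y_\lambda$ via the a priori $L^\infty$ and $C^{1,\alpha}$ estimates, the compact embedding of $C^{1,\alpha}_0(\overline\Omega)$ into $C^1_0(\overline\Omega)$, the inclusion $y_\lambda\in{\rm int}_{C^1_0(\overline\Omega)}[\hat v_\lambda,\hat u_\lambda]$ and the finiteness of $K_{\hat\sigma_\lambda}$; (iii) conclude $C_k(\hat\sigma_\lambda,y_\lambda)=C_k(\hat\varphi_\lambda,y_\lambda)$ by homotopy invariance of critical groups; (iv) apply the cited $C^2$ result of Papageorgiou--R\u adulescu \cite{17} to upgrade $C_1\neq0$ to $\delta_{k,1}\ZZ$. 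Note also that (iv) is not the classical shifting theorem: the second derivative of the $(p,2)$-energy at a critical point need not be a Fredholm operator, which is why a bespoke result is invoked rather than the standard Morse-index-at-most-one argument. Once these points are supplied, your proof becomes the paper's proof.
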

\begin{proof}
	From Proposition \ref{prop10}, we know that we can find $\lambda^0_*>0$ such that for all $\lambda>\lambda^0_*$ problem \eqref{eqP} has at least three nontrivial smooth solutions
	\begin{equation}\label{eq42}
		\hat{u}_\lambda\in D_+,\ \hat{v}_\lambda\in -D_t\ \mbox{and}\ y_\lambda\in {\rm int}_{C_0^1(\overline{\Omega})}[\hat{v}_\lambda,\hat{u}_\lambda]\ \mbox{nodal}.
	\end{equation}
	
	We use the notation introduced in the proof of Proposition \ref{prop14}. By Claim \ref{claim2} of that proof, we know that $\hat{u}_\lambda$ and $\hat{v}_\lambda$ are both local minimizers of the functional $\hat{\sigma}_\lambda$. Therefore we have
	\begin{equation}\label{eq43}
		C_k(\hat{\sigma}_\lambda,\hat{u}_\lambda) = C_k(\hat{\sigma}_\lambda,\hat{v}_\lambda) = \delta_{k,0} \ZZ\ \mbox{for all}\ k\in\NN_0.
	\end{equation}
	
	Moreover, from (\ref{eq36}) we have
	\begin{equation}\label{eq44}
		C_1(\hat{\sigma}_\lambda,y_\lambda)\neq0.
	\end{equation}
	
	Consider the functional $\hat{\varphi}_\lambda\in C^2(W^{1,p}_0(\Omega),\RR)$ introduced before Proposition \ref{prop8} (note that  because of hypotheses $H_2$ and since $\rho\in C^2(\RR,[0,1])$, we have that $\hat{\varphi}_\lambda$ is $C^2$). We consider the homotopy
	$$h_\lambda(t,u) = (1-t)\hat{\sigma}_\lambda(u) + t\hat{\varphi}_\lambda(u)\ \mbox{for all}\ (t,u)\in[0,1]\times W^{1,p}_0(\Omega).$$
	
	Suppose we could find $\{t_n\}_{n\ge1}\subseteq[0,1]$ and $\{u_n\}_{n\ge1}\subseteq W^{1,p}_0(\Omega)$ such that
	\begin{equation}\label{eq45}
		t_n\rightarrow t\in [0,1], u_n\rightarrow y_\lambda\ \mbox{in}\ W^{1,p}_0(\Omega)\ \mbox{and}\ (h_\lambda)'_u(t_n,u_n)=0\ \mbox{for all}\ n\in\NN.
	\end{equation}
	
	From the equality in (\ref{eq45}), we have
	\begin{eqnarray*}
		& & \langle A_p(u_n),h\rangle + \langle A(u_n),h\rangle = (1-t_n)\lambda\int_\Omega\hat{g}(z,u_n)hdz + t_n\lambda\int_\Omega\hat{f}(z,u_n)hdz \\
		& & \mbox{for all}\ h\in W^{1,p}_0(\Omega),\ \mbox{all}\ n\in\NN
	\end{eqnarray*}
	\begin{eqnarray*}
		& & \Rightarrow -\Delta_p u_n(z) - \Delta u_n(z) = \lambda\left[(1-t_n)\hat{g}(z,u_n(z)) + t_n\hat{f}(z,u_n(z))\right]\\
		& &  \mbox{for almost all}\ z\in\Omega,\ u_n|_{\partial\Omega}=0\ \mbox{for all}\ n\in\NN.
	\end{eqnarray*}
	
	Corollary 8.6 of Motreanu, Motreanu and Papageorgiou \cite[p. 208]{16}, implies that we can find $c_{12}>0$ such that
	$$||u_n||_\infty\le c_{12}\ \mbox{for all}\ n\in\NN.$$
	
	Then Theorem 1 of Lieberman \cite{15} implies that we can find $\alpha\in(0,1)$ and $c_{13}>0$ such that
	\begin{equation}\label{eq46}
		u_n\in C_0^{1,\alpha}(\overline{\Omega}),\ ||u_n||_{C_0^{1,\alpha}(\overline{\Omega})}\le c_{13}\ \mbox{for all}\ n\in\NN.
	\end{equation}
	
	Exploiting the compact embedding of $C^{1,\alpha}_0(\overline{\Omega})$ into $C_0^1(\overline{\Omega})$, from (\ref{eq45}) and (\ref{eq46}) we infer that
	\begin{eqnarray*}
		& & u_n\rightarrow y_n\ \mbox{in}\ C^1_0(\overline{\Omega}),\\
		& \Rightarrow & u_n\in[\hat{v}_\lambda,\hat{u}_\lambda]\ \mbox{for all}\ n\ge n_0\ \mbox{(see (\ref{eq42}))},\\
		& \Rightarrow & \{u_n\}_{n\geq1}\subseteq K_{\hat{\sigma}_\lambda}\ \mbox{(see (\ref{eq26}) and recall the definition of $\hat{f}$)},
	\end{eqnarray*}
	a contradiction to our hypothesis that $K_{\hat{\sigma}_\lambda}$ is finite (see (\ref{eq28})). Hence (\ref{eq45}) cannot occur and from the homotopy invariance of critical groups (see Gasinski and Papageorgiou \cite[Theorem 5.125, p. 836]{12}), we have
	\begin{eqnarray}
		& & C_k(\hat{\sigma}_\lambda,y_\lambda) = C_k(\hat{\varphi}_\lambda,y_\lambda)\ \mbox{for all}\ k\in\NN_0, \label{eq47}\\
		& \Rightarrow & C_1(\hat{\varphi}_\lambda,y_\lambda)\neq0\ \mbox{(see (\ref{eq44}))}. \label{eq48}
	\end{eqnarray}
	
	Since $\hat{\varphi}_\lambda\in C^2(W^{1,p}_0(\Omega), \RR)$, from (\ref{eq48}) and Papageorgiou and R\u adulescu \cite{17} (see Proposition 3.5, Claim 3), we have
	\begin{eqnarray}
		& & C_k(\hat{\varphi}_\lambda,y_\lambda) = \delta_{k,1}\ZZ\ \mbox{for all}\ k\in\NN_0,\nonumber\\
		& \Rightarrow & C_k(\hat{\sigma}_\lambda,y_\lambda) = \delta_{k,1}\ZZ\ \mbox{for all}\ k\in\NN_0\
\mbox{(see \eqref{eq47})}.\label{eq49}
	\end{eqnarray}
	From the proof of Proposition 14, we know that $u=0$ is a local minimizer of $\hat{\sigma}_\lambda$. Hence
	\begin{equation}\label{eq50}
		C_k(\hat{\sigma}_\lambda,0) = \delta_{k,0}\ZZ\ \mbox{for all}\ k\in\NN_0.
	\end{equation}
	
	By (\ref{eq26}) it is clear that $\hat{\sigma}_\lambda$ is coercive. Hence
	\begin{equation}\label{eq51}
		C_k(\hat{\sigma}_\lambda,\infty) = \delta_{k,0}\ZZ\ \mbox{for all}\ k\in\NN_0.
	\end{equation}
	
	Suppose that $K_{\hat{\sigma}_\lambda}=\{\hat{u}_\lambda,\hat{v}_\lambda,y_\lambda,0\}$. Then from (\ref{eq43}), (\ref{eq49}), (\ref{eq50}), (\ref{eq51}) and the Morse relation with $t=-1$ (see (\ref{eq1})), we have
	$$2(-1)^0 + (-1)^1 + (-1)^0 = (-1)^0,$$
	a contradiction. This means that there exists $\hat{y}_{\lambda}\in K_{\hat{\sigma}_\lambda}, \hat{y}_\lambda\notin\{\hat{u}_\lambda,\hat{v}_\lambda,y_\lambda,0\}$. Assuming without any loss of generality that the two constant sign solutions $\{\hat{u}_\lambda,\hat{v}_\lambda\}$ are extremal (that is, $\hat{u}_\lambda=u^*_\lambda, \hat{v}_\lambda=v^*_\lambda$, see Proposition \ref{prop13}), we have that
	$$\hat{y}_\lambda\in[\hat{v}_\lambda,\hat{u}_\lambda]\cap C^1_0(\overline\Omega)\ \mbox{(see Claim \ref{claim2} in the proof of Proposition \ref{prop14}) is nodal}.$$

	Moreover, as in the proof of Proposition \ref{prop14}, using Proposition \ref{prop3}, we show that
	$$\hat{y}_\lambda\in {\rm int}_{C^1_0(\overline{\Omega})}[\hat{v}_\lambda,\hat{u}_\lambda].$$
The proof of Theorem \ref{th16} is now complete.
\end{proof}

\section{Infinitely Many Nodal Solutions}
In this section we introduce a symmetry condition on $f(z,\cdot)$ (namely, that it is odd) and using Theorem \ref{th4}, we show that for all $\lambda>0$ big, problem \eqref{eqP} has a whole sequence of nodal solutions converging to zero in $C^1_0(\overline{\Omega})$.

The new hypotheses on the reaction term $f(z,x)$ are the following:

\smallskip
$H_3$: $f:\Omega\times\RR\rightarrow\RR$ is a Carath\'eodory function such that for almost all $z\in\Omega$, $f(z,0)=0, f(z,\cdot)$ is odd and hypotheses $H_3$ (i), (ii), (iii), (iv) are the same as the corresponding hypotheses $H_1$ (i), (ii), (iii), (iv).
\begin{theorem}\label{th17}
	If hypotheses $H_3$ hold, then we can find $\lambda^1_*>0$ such that for all $\lambda>\lambda^1_*$ problem \eqref{eqP} has a sequence of nodal solutions $\{u_n\}_{n\ge1}\subseteq C^1_0(\overline\Omega)$ such that $u_n\rightarrow0$ in $C_0^1(\overline\Omega)$.
\end{theorem}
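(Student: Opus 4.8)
The plan is to apply Kajikiya's theorem (Theorem \ref{th4}) to the truncated energy functional $\hat{\sigma}_\lambda$ introduced in the proof of Proposition \ref{prop14}, after first exploiting the oddness of $f(z,\cdot)$ to make this functional \emph{even}. The first thing I would record is that under $H_3$ the two extremal constant sign solutions of Proposition \ref{prop13} are antipodal: since $f(z,\cdot)$ is odd, $u$ solves \eqref{eqP} if and only if $-u$ does, so $-u^*_\lambda$ is a negative solution and $-v^*_\lambda$ a positive solution lying in the relevant order intervals, and the extremality of $u^*_\lambda,v^*_\lambda$ forces $v^*_\lambda=-u^*_\lambda$. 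Hence the order interval $[v^*_\lambda,u^*_\lambda]=[-u^*_\lambda,u^*_\lambda]$ is symmetric, the truncation $\hat{g}(z,\cdot)$ of \eqref{eq26} is odd, and therefore $\hat{\sigma}_\lambda$ is an even $C^1$ functional with $\hat{\sigma}_\lambda(0)=0$.

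Next I would verify the remaining structural hypotheses of Theorem \ref{th4}. As already observed in the proof of Proposition \ref{prop14} (see \eqref{eq26} and \eqref{eq34}), $\hat{\sigma}_\lambda$ is coercive, hence bounded below and satisfying the PS-condition. Thus everything except the geometric condition is in place, and it only remains to produce, for every $n\in\NN$, an $n$-dimensional subspace $V_n\subseteq W^{1,p}_0(\Omega)$ and a radius $\rho_n>0$ with $\sup\{\hat{\sigma}_\lambda(u):u\in V_n,\ ||u||=\rho_n\}<0$.

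For the geometric condition I would take $V_n$ to be an $n$-dimensional subspace of $C^1_0(\overline{\Omega})$ (so that all norms are mutually equivalent on $V_n$ and its elements are uniformly bounded), and work on a sphere small enough that every $u\in V_n$ with $||u||=\rho_n$ lies in the symmetric interval $[-u^*_\lambda,u^*_\lambda]$ and satisfies $||u||_\infty\le\eta$; there $\hat{G}(z,u)=F(z,u)$ and the lower bound \eqref{eq31}, namely $F(z,x)\ge\xi|x|^\beta$, is available. Using the equivalence of norms on the finite dimensional space $V_n$ one then gets, for such $u$,
\[
\hat{\sigma}_\lambda(u)\le\frac{1}{p}\rho_n^p+\frac{c_3^2}{2}\rho_n^2-\lambda\xi\,\kappa_n\,\rho_n^\beta ,
\]
where $\kappa_n>0$ is the constant controlling $||\cdot||_\beta^\beta$ from below on $V_n$. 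Since $2<p<\beta$, the reaction term must be made to dominate, and this is precisely where the largeness of $\lambda$ enters: I would fix $\lambda^1_*\ge\lambda_*$ and the pairs $(V_n,\rho_n)$ so that $\lambda\xi\kappa_n\rho_n^{\beta-2}$ beats $\tfrac{1}{p}\rho_n^{p-2}+\tfrac{c_3^2}{2}$ while keeping $\rho_n$ within the admissible range dictated by $||u||_\infty\le\eta$. I expect this balancing — arranging the subspaces $V_n$ and radii $\rho_n$ so that a \emph{single} threshold $\lambda^1_*$ renders the supremum negative simultaneously for all $n$ — to be the main obstacle, since the constant $\kappa_n$ and the sup-norm comparison constants on $V_n$ degenerate as $n\to\infty$; the delicate point is to build the $V_n$ from a fixed nested family of smooth functions and to tie $\rho_n$ to the admissible radius, so that the competition between the quadratic operator term and the $\beta$-superlinear lower bound on $F$ is kept under control uniformly in $n$.

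Granting the geometric condition, Theorem \ref{th4} yields a sequence $\{u_n\}_{n\ge1}\subseteq K_{\hat{\sigma}_\lambda}$ with $u_n\to0$ in $W^{1,p}_0(\Omega)$. By the argument of Claim \ref{claim1}, $K_{\hat{\sigma}_\lambda}\subseteq[-u^*_\lambda,u^*_\lambda]\cap C^1_0(\overline{\Omega})$, so on the interval $\hat{g}(z,\cdot)=f(z,\cdot)$ and each $u_n$ solves \eqref{eqP}. As the $u_n$ take values in $[-\eta,\eta]$ they are uniformly bounded in $L^\infty$, and the regularity theory of Lieberman \cite{15} furnishes a uniform $C^{1,\alpha}_0(\overline{\Omega})$ bound; combined with $u_n\to0$ in $W^{1,p}_0(\Omega)$ and the compact embedding $C^{1,\alpha}_0(\overline{\Omega})\hookrightarrow C^1_0(\overline{\Omega})$, this upgrades the convergence to $u_n\to0$ in $C^1_0(\overline{\Omega})$. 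Finally, each $u_n$ is nontrivial and, being a solution lying in $[-u^*_\lambda,u^*_\lambda]$, cannot be of constant sign for large $n$ without contradicting the extremality of $u^*_\lambda$ and $v^*_\lambda=-u^*_\lambda$: a positive (resp. negative) solution in the interval would have to coincide with $u^*_\lambda$ (resp. $v^*_\lambda$), which is impossible since $u_n\to0$. Hence the $u_n$ are nodal, which completes the proof.
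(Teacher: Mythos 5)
Your overall strategy is the paper's: make the truncated energy functional even, check that it is coercive, bounded below and satisfies the PS--condition, apply Kajikiya's theorem (Theorem \ref{th4}), and then upgrade the $W^{1,p}_0(\Omega)$-convergence to $C^1_0(\overline{\Omega})$-convergence via Lieberman's regularity so that the extremality of $u^*_\lambda,v^*_\lambda$ forces the solutions to be nodal. The one structural difference is the choice of truncation: the paper works with $\tilde{f}(z,x)$, the truncation of $f$ at $\{-\eta,\eta\}$, which is odd by fiat since $f(z,\cdot)$ is odd and the truncation levels are symmetric; you instead work with $\hat{\sigma}_\lambda$ (truncation at $\{v^*_\lambda,u^*_\lambda\}$) and therefore must first prove $v^*_\lambda=-u^*_\lambda$. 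That observation is correct and cleanly argued (oddness maps $S_+$ onto $-S_-$, and extremality gives the two inequalities $v^*_\lambda\le -u^*_\lambda\le v^*_\lambda$), so this part of your variant is fine; it simply trades the antipodality lemma for the paper's need to check, a posteriori, that the critical points of $\tilde{\varphi}_\lambda$ eventually land in $[v^*_\lambda,u^*_\lambda]$.

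The genuine gap is the one you yourself flag and then do not close: the verification of the geometric condition of Theorem \ref{th4} for \emph{every} $n$ with a \emph{single} threshold $\lambda^1_*$. Your displayed inequality
$\hat{\sigma}_\lambda(u)\le\tfrac{1}{p}\rho_n^p+\tfrac{c_3^2}{2}\rho_n^2-\lambda\xi\kappa_n\rho_n^\beta$
yields negativity only when $\lambda\xi\kappa_n\rho_n^{\beta-2}>\tfrac{1}{p}\rho_n^{p-2}+\tfrac{c_3^2}{2}$, and here the constraint $\beta>2$ works against you: as $\rho_n\downarrow 0$ the left-hand side vanishes while the right-hand side tends to $c_3^2/2>0$, so $\rho_n$ cannot be taken small, whereas the admissibility constraint $\|u\|_\infty\le\eta$ forces $\rho_n\le\eta/C_n$ with $C_n=\sup\{\|u\|_\infty:u\in V_n,\ \|u\|=1\}$. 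Since the embedding $W^{1,p}_0(\Omega)\hookrightarrow L^\beta(\Omega)$ is compact, $\kappa_n=\inf\{\|u\|_\beta^\beta:u\in V_n,\ \|u\|=1\}\rightarrow 0$ for \emph{any} choice of $n$-dimensional subspaces (this is the statement that Bernstein widths of a compact embedding tend to zero), and the sup-norm constants $C_n$ blow up; consequently the threshold $\Lambda_n=\bigl(\tfrac{1}{p}\rho_n^{p-2}+\tfrac{c_3^2}{2}\bigr)/\bigl(\xi\kappa_n\rho_n^{\beta-2}\bigr)$ extracted from your estimate tends to $+\infty$ with $n$, no matter how the nested family of smooth functions is chosen. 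So the estimate as written does not produce a $\lambda^1_*$ valid for all $n$, and "I expect this balancing to be the main obstacle" is a statement of the problem, not a solution. To be fair, the paper's own proof is no more detailed here: it performs exactly this computation on an arbitrary finite-dimensional subspace $Y$ (see (\ref{eq53})--(\ref{eq54})) and asserts the existence of $\lambda^1_*$ by reference to the proof of Proposition \ref{prop14}, where the corresponding threshold $\lambda^0$ is computed only for the fixed one-dimensional direction $\tau\hat{u}_1(p)$. You have therefore reproduced the paper's argument up to and including its weakest step, but a complete proof would have to either exhibit subspaces and radii for which the constants do not degenerate, or invoke a version of the symmetric mountain pass machinery in which the smallness threshold is allowed to depend on $n$; as submitted, the key quantitative step remains unproved.
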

\begin{proof}
	Let $\hat{f}(z,x)$ be as in Section 3. Consider its truncation at $\{-\eta,\eta\}$, that is, the Carath\'eodory function
	\begin{equation}\label{eq52}
		\tilde{f}(z,x)=\left\{\begin{array}{ll}
			f(z,-\eta) & \mbox{if}\ x<-\eta \\
			f(z,x) & \mbox{if}\ -\eta\le x\le\eta\ \mbox{(see (\ref{eq3}))} \\
			f(z,\eta) & \mbox{if}\ \eta<x.
		\end{array}
		\right.
	\end{equation}
	
	We set $\tilde{F}(z,x)=\int_0^x\tilde{f}(z,s)ds$ and consider the $C^1$-functional $\tilde{\varphi}_\lambda:W^{1,p}_0(\Omega)\rightarrow\RR$ defined by
	$$\tilde{\varphi}_\lambda(u)=\frac{1}{p}||Du||^p_p + \frac{1}{2}||Du||^2_2 - \lambda\int_\Omega\tilde{F}(z,u)dz\ \mbox{for all}\ u\in W^{1,p}_0(\Omega).$$
	
	Evidently $\tilde{\varphi}_\lambda$ is even, coercive (see (\ref{eq52})), hence it is also bounded below and satisfies the PS-condition. Moreover, $\tilde{\varphi}_\lambda(0)=0$.
	
	Let $Y\subseteq W^{1,p}_0(\Omega)$ be a finite dimensional subspace. All norms on Y are equivalent. So, we can find $\rho_0>0$ such that
	\begin{equation}\label{eq53}
		u\in Y,\ ||u||\le \rho_0 \Rightarrow |u(z)|\le\eta\ \mbox{for almost all}\ z\in\Omega.
	\end{equation}
	
	Hypothesis $H_3(ii)$ implies that we can find $\xi_1>0$ such that
	\begin{equation}\label{eq54}
		\tilde{F}(z,x)\ge\xi_1|x|^\beta\ \mbox{for almost all}\ z\in\Omega,\ \mbox{all}\ |x|\le\eta\ \mbox{(see (\ref{eq2}))}.
	\end{equation}
	
	Using (\ref{eq53}), (\ref{eq54}) and reasoning as in the proof of Proposition \ref{prop14}, we can find $\lambda^1_*>0$ such that for all $\lambda>\lambda^1_*$ we can find $\rho_\lambda>0$ for which we have
	$$\sup\left[\tilde{\varphi}_\lambda(u): u\in Y, ||u||=\rho_\lambda\right]<0.$$
	
	Applying Theorem \ref{th4}, we can find $\{u_n\}_{n\ge1}\subseteq K_{\tilde{\varphi}_\lambda}$ such that
	\begin{equation}\label{eq55}
		u_n\rightarrow0\ \mbox{in}\ W^{1,p}_0(\Omega).
	\end{equation}
	As before, using the nonlinear regularity theory and (\ref{eq54}), we have
	\begin{eqnarray*}
		& & u_n\rightarrow0\ \mbox{in}\ C^1_0(\overline\Omega), \\
		& \Rightarrow & u_n\in[v^*_\lambda,u^*_\lambda]\ \mbox{for\ all}\ n\ge n_0\ (\mbox{see Proposition (\ref{prop13})}), \\
		& \Rightarrow & \{u_n\}_{n\ge n_0}\ \mbox{are nodal solutions of \eqref{eqP}}\ \mbox{for}\ \lambda>\lambda^1_*.
	\end{eqnarray*}
The proof of Theorem \ref{th17} is now complete.
\end{proof}

\medskip
{\bf Acknowledgements.} This research was supported by the Slovenian Research Agency grants
P1-0292, J1-8131, J1-7025, and N1-0064. V.D.~R\u adulescu acknowledges the support through a grant of the Romanian Ministry of Research and Innovation, CNCS--UEFISCDI, project number PN-III-P4-ID-PCE-2016-0130,
within PNCDI III.


\begin{thebibliography}{99}
\bibitem{1} S. Aizicovici, N.S. Papageorgiou, V. Staicu, {\it Degree Theory for Operators of Monotone Type and Nonlinear Elliptic Equations with Inequality Constraints}, Memoirs Amer. Math. Soc., Vol. 196, No. 915, 2008.

\bibitem{2} S. Aizicovici, N.S. Papageorgiou, V. Staicu, Nodal solutions for $(p,2)$-equations, {\it  Trans. Amer. Math. Soc.} {\bf 367} (2015), 7343-7372.

\bibitem{3} A. Ambrosetti, P. Rabinowitz,  Dual variational methods in critical point theory and applications, {\it  J. Functional Anal.} {\bf 14} (1973), 349-381.

\bibitem{4} D. Arcoya, D. Ruiz,  The Ambrosetti-Prodi problem for the $p$-Laplace operator, {\it  Comm. Partial Differential Equations} {\bf 31} (2006), 849-865.

\bibitem{ball1} J.M. Ball, Convexity conditions and existence theorems in nonlinear elasticity, {\it
Arch. Rational Mech. Anal.} {\bf 63} (1976/77), no. 4, 337-403.

\bibitem{ball2} J.M. Ball, Discontinuous equilibrium solutions and cavitation in nonlinear elasticity, {\it
Philos. Trans. Roy. Soc. London Ser. A} {\bf 306} (1982), no. 1496, 557-611.

 \bibitem{mingi1} P. Baroni, M. Colombo, and G. Mingione, Harnack inequalities for double phase functionals, {\it Nonlinear Anal.} {\bf 121} (2015), 206-222.

\bibitem{mingi2} P. Baroni, M. Colombo, and G. Mingione, Nonautonomous functionals, borderline cases and related function classes, {\it Algebra i Analiz} {\bf 27} (2015), no. 3, 6-50; translation in
{\it St. Petersburg Math. J.} {\bf 27} (2016), no. 3, 347-379.

\bibitem{mingi3} P. Baroni, M. Colombo, and G. Mingione, Regularity for general functionals with double phase, {\it Calculus of Variations \& Partial Differential Equations}, to appear.

\bibitem{5} L. Cherfils, Y. Ilyasov, On the stationary solutions of generalized reaction diffusion equations with $p\& q$ Laplacian, {\it  Commun. Pure Appl. Anal.} {\bf 4} (2005), 9-22.

\bibitem{mingi4} M. Colombo and G. Mingione, Regularity for double phase variational problems, {\it Arch. Ration. Mech. Anal.} {\bf 215} (2015), no. 2, 443-496.

\bibitem{mingi5} M. Colombo and G. Mingione, Bounded minimisers of double phase variational integrals, {\it  Arch. Ration. Mech. Anal.} {\bf 218} (2015), no. 1, 219-273.

\bibitem{6} D. Costa, Z-Q. Wang,  Multiplicity results for a class of superlinear elliptic problems, {\it  Proc. Amer. Math. Soc.} {\bf 133} (2005), 787-794.

\bibitem{7} G.H. Derrick, Comments on nonlinear wave equations as models for elementary particles, {\it  J. Math. Physics} {\bf 5} (1964), 1252-1254.

\bibitem{8} M. Filippakis, D. O'Regan, N.S. Papageorgiou, Positive solutions and bifurcation phenomena for nonlinear elliptic equations of logistic type: the superdiffusive case, {\it  Commun. Pure Appl. Anal.} {\bf 9} (2010), 1507-1527.

\bibitem{9} M. Filippakis, N.S. Papageorgiou, Multiple constant sign and nodal solutions for nonlinear elliptic equations with the $p$-Laplacian, {\it  J. Differential Equations} {\bf 245} (2008), 1883-1922.

\bibitem{10} L. Gasinski, N.S. Papageorgiou, {\it Nonlinear Analysis}, Chapman \& Hall/CRC, Boca Raton, FL, 2006.

\bibitem{11} L. Gasinski, N.S. Papageorgiou,  Multiple solutions for nonlinear coercive problems with a nonhomogeneous differential operator and a nonsmooth potential, {\it  Set-Valued Var. Anal.} {\bf 20} (2012), 417-443.

\bibitem{12} L. Gasinski, N.S. Papageorgiou, {\it Exercises in Analysis. Part 2: Nonlinear Analysis}, Springer, Heidelberg, 2016.

\bibitem{13} S. Hu, N.S. Papageorgiou, {\it Handbook of Multivaluated Analysis. Volume I: Theory}, Kluwer Academic Publishers, Dordrecht, The Netherlands, 1997.

\bibitem{14} R. Kajikiya,  A critical point theorem related to the symmetric mountain pass lemma and its applications to elliptic equations, {\it  J. Functional Anal.} {\bf 255} (2005), 352-370.

\bibitem{15} G. Lieberman, Boundary regularity for solutions of degenerate elliptic equations, {\it  Nonlinear Anal.} {\bf 12} (1988), 1203-1219.

\bibitem{marce1} P. Marcellini, On the definition and the lower semicontinuity of certain quasiconvex integrals, {\it Ann. Inst. H. Poincar\'e, Anal. Non Lin\'eaire} {\bf 3} (1986), 391-409.

\bibitem{marce2} P. Marcellini, Regularity and existence of solutions of elliptic equations with $p, q$--growth conditions, {\it J.
Differential Equations} {\bf 90} (1991), 1-30.

\bibitem{16} D. Motreanu, V. Motreanu, N.S. Papageorgiou, {\it Topological and Variational Methods with Applications to Nonlinear Boundary Value Problems}, Springer, New York, 2014.

\bibitem{17} N.S. Papageorgiou, V.D. R\u adulescu, Qualitative phenomena for some classes of quasilinear elliptic equations with multiple resonance, {\it  Appl. Math. Optim.} {\bf 69} (2014), 393-430.

\bibitem{18} N.S. Papageorgiou, V.D. R\u adulescu, Resonant $(p,2)$-equations with asymmetric reaction, {\it  Anal. Appl.} {\bf 13} (2015), 481-506.

\bibitem{19} N.S. Papageorgiou, V.D. R\u adulescu, Nonlinear nonhomogeneous Robin problems with superlinear reaction term, {\it  Adv. Nonlinear Studies} {\bf 16} (2016), 737-764.

\bibitem{20} N.S. Papageorgiou, V.D. R\u adulescu, D.D. Repov\v{s}, On a class of parametric $(p,2)$-equations, {\it  Appl. Math. Optim.} {\bf 75} (2017), 193-228.

\bibitem{21} N.S. Papageorgiou, G. Smyrlis, On nonlinear nonhomogeneous resonant Dirichlet equations, {\it  Pacific J. Math.} {\bf 264} (2013), 421-453.

\bibitem{22} P. Pucci, J. Serrin, {\it The Maximum Principle}, Birkh\"{a}user, Basel, 2007.

\bibitem{23} M. Sun, Multiplicity of solutions for a class of quasilinear elliptic equations at resonance, {\it  J. Math. Anal. Appl.} {\bf 386} (2012), 661-668.

\bibitem{24} M. Sun, M. Zhang, J. Su, Critical groups at zero and multiple solutions for a quasilinear elliptic equation, {\it  J. Math. Anal. Appl.} {\bf 428} (2015), 696-712.

\bibitem{25} H. Wilhelmsson, Explosive instabilities of reaction-diffusion equations, {\it  Phys. Rev. A} {\bf 36} (1987), 965-966.

\bibitem{zhikov1} V.V. Zhikov, Averaging of functionals of the calculus of variations and elasticity theory, {\it Izv. Akad. Nauk SSSR Ser. Mat.} {\bf 50} (1986), no. 4, 675-710; English translation in {\it Math. USSR-Izv.} {\bf 29} (1987), no. 1, 33-66.

\bibitem{zhikov2} V.V. Zhikov, On Lavrentiev's phenomenon, {\it Russian J. Math. Phys.} {\bf 3} (1995), no. 2, 249-269.
\end{thebibliography}
\end{document}